\newtheorem{definition}{Definition}
\newtheorem{example}{Example}
\newtheorem{theorem}{Theorem}
\newcommand{\comments}[1]{}
\newcommand{\blue}[1]{{\color{black}#1}} 
\definecolor{purple}{RGB}{153,50,204}
\newcommand{\SLP}{$S$-LP}
\newcommand{\first}{(F1)}
\newcommand{\second}{(F2)}
\begin{document}

	\title{Algorithmic expedients for the $S$-labeling problem\footnote{\textcopyright 2019. This manuscript version is made available under the CC-BY-NC-ND 4.0 license \url{http://creativecommons.org/licenses/by-nc-nd/4.0/}. Accepted for publication in Computers \& Operations Research; doi: 10.1016/j.cor.2019.04.014}}
	
	\author[1]{Markus Sinnl\thanks{markus.sinnl@univie.ac.at}}
	
	\affil[1]{Department of Statistics and Operations Research, Faculty of Business, Economics and Statistics, University of Vienna, Vienna, Austria \newline Institute of 
		Production and Logistics Management, Johannes Kepler University Linz, Linz, Austria}

	\date{}
	
	\maketitle

\begin{abstract}
Graph labeling problems have been widely studied in the last decades and have a vast area of application. 
In this work, we study the recently introduced $S$-labeling problem, in which the nodes get labeled using labels from 1 to $|V|$ and for each edge the contribution to the objective function, called $S$-labeling number of the graph, is the minimum label of its end-nodes. The goal is to find a labeling with minimum value. 
The problem is NP-hard for planar subcubic graphs, although for many other graph classes the complexity status is still unknown. 

In this paper, we present different algorithmic approaches for tackling this problem: We develop an exact solution framework based on Mixed-Integer Programming (MIP)
which is enhanced with valid inequalities, starting and primal heuristics and specialized branching rules. We show that our MIP formulation has no integrality gap for paths, cycles and perfect $n$-ary trees, and,
to the best of our knowledge, we give the first polynomial-time algorithm for the problem on $n$-ary trees as well as a closed formula for the $S$-labeling number for such trees. 
Moreover, we also present a Lagrangian heuristic and a constraint programming approach. 
A computational study is carried out in order to 
(i) investigate if there may be other special graph classes, where our MIP formulation has no integrality gap, 
and (ii) assess the effectiveness of the proposed solution approaches for solving the problem on a dataset consisting of general graphs.
\end{abstract}

\section{Introduction and motivation}

Graph labeling problems have been widely studied in the last decades and they have a vast area of application,~e.g.,
coding theory~\cite{bloom1977applications}, computational biology~\cite{karp1993mapping}, 
computer networks~\cite{jin2005graph}, design of error-correcting codes~\cite{rodriguez2008effective} radio channel assignment~\cite{van1998graph} and more (we refer the reader to the surveys~\cite{diaz2002survey,dehghan2013algorithmic,gallian2009dynamic} for further details).
In such problems, we are typically interested in assigning distinct positive integers (i.e., \emph{labels}) to the nodes and/or edges of the graph subject to some constraints, such that a given objective function is optimized. 
In this work, we concentrate on the \emph{$S$-labeling problem}, which is defined as follows.
\begin{definition}[The $S$-labeling problem (\SLP)]
Let $G=(V,E)$ be a graph and $\phi:V \rightarrow \{1, \ldots, |V|\}$ be a labeling of the nodes. The $S$-labeling number $SL_\phi(G)$ with respect to the labeling $\phi(G)$ is defined as $\sum_{\{u,v\} \in E} \min\{\phi(u),\phi(v)\}$. The goal of the $S$-labeling problem (\SLP) is to find a labeling $\phi^*(G)$, such that $SL_{\phi^*}(G)$ has minimum value amongst all possible labelings for $G$.

\end{definition}

In the remainder of this paper, we simply write $\phi^*$ and $SL_{\phi^*}$, when the graph is clear from the context. 
The \SLP\ was introduced in~\cite{vialette2006packing} in the context of packing $(0,1)$-matrices, where it was shown to be \emph{NP-complete} for planar subcubic graphs.
It is studied in detail in a series of papers~\cite{fertin2009s,fertin2015algorithmic,fertin2017s}, which focused on deriving properties of optimal labelings. 
Based on these properties, exact and approximation algorithms for some graph classes are developed: Polynomial-time exact algorithms are given for \emph{caterpillar graphs} (trees, where all nodes are at most distance one from the central path) and \emph{split graphs} (graphs which can be partitioned into a clique and an independent set). 
Moreover, a greedy approximation algorithm, which gives a labeling $\phi$ with $SL_\phi< \frac{|E|(|V|+1)}{3}$ (resp., $\frac{|E|(|V|+1)}{4}$ if the graph is acyclic with the maximum degree of a node at least three) is presented. 
This algorithm is shown to be a $\frac{4|E|\Delta}{3|V|}$ approximation algorithm for general graphs, where $\Delta$ is the maximum degree of a node in the graph, and a $4/3$ approximation algorithm for regular graphs; for some other special graphs, the approximation factor is further refined. 
Additionally, a fixed-point parameter tractable algorithm is also presented;
this approach is based on partial enumeration, where the parameters is a positive integer $k$ and the question is whether there is a labeling with $SL_\phi<|E|+k$. 
Finally, closed formulas for $SL_{\phi^*}$ for complete graphs, paths and cycles are given, but without proof. 
We note that one result of~\cite{fertin2015algorithmic} (Lemma 3) claims that in any optimal labeling $\phi^*$, the node $i$ with label one is a node with maximum degree in the graph. 
We have a counter-example to this result, see Section~\ref{sec:analyze}.

\paragraph{Contribution and paper outline} In this paper, we develop different modeling and algorithmic approaches to characterize and solve the \SLP. 
The paper is organized as follows. In Section~\ref{sec:form}, \blue{we present two Mixed-Integer Programming (MIP) formulations (denoted as \first\ and \second) for the problem.} We describe an exact solution framework based on these MIPs, which includes starting and primal heuristics to construct high-quality feasible solutions during the solution process as well as a specialized branching-scheme. \blue{Moreover, we show that formulation \first\ can be obtained as projection of formulation \second.}
In Section~\ref{sec:analyze} we investigate the Linear Programming (LP) relaxation of the MIPs presented in Section~\ref{sec:form}, and show that for paths, cycles and perfect $n$-ary trees the LP-relaxation exhibits no integrality gap. 
This is done by presenting a combinatorial procedure for solving the dual of the LP-relaxation and ad-hoc polynomial-time primal algorithms. To the best of our knowledge, we present the first polynomial-time algorithm for solving the \SLP\ on perfect $n$-ary trees. We also give a closed formula to compute $SL_{\phi^*}$ for perfect $n$-ary trees. 

In Section~\ref{sec:alternative}, we present further solution methods, namely a Lagrangian heuristic and a constraint programming approach.
%
Section~\ref{sec:res} contains a computational study. The purpose of this study is twofold: (i) to computationally investigate, if there are additional classes of graphs, where the LP-relaxation of our MIP formulation may have no integrality gap; and (ii) to assess the performance of the presented approaches for solving the \SLP\ on general graphs.
Finally, Section~\ref{sec:con} concludes the paper.

\section{Mixed-Integer Programming approaches for the \SLP}
\label{sec:form}

In this section, we present MIP (exact) approaches for solving the \SLP.
We first provide two MIP formulations for the problem and show that one of them is the projection of the other. After this, we describe branch-and-cut schemes based on these MIPs.

%
%
%

For the proposed formulations, let $x^k_i$, with $i,k\in\{1,\ldots,|V|\}$, be a binary variable such that $x^k_i=1$, if node $i$ (or, more formally, the node with index $i$) 
gets labeled with the number $k$, and $x^k_i=0$, otherwise. 
The following constraints~\eqref{eq:a-sum} and~\eqref{eq:b-sum} ensure that each label gets used exactly once, and that each node gets one label,
\begin{align} 
\sum_{i \in\{1,\ldots,|V|\} } x^k_i&=1 &\quad \forall k \in\{1,\ldots,|V|\} \label{eq:a-sum} \tag{UNIQUE}\\
\sum_{k \in\{1,\ldots,|V|\} } x^k_i&=1 &\quad \forall i \in\{1,\ldots,|V|\}. \label{eq:b-sum} \tag{ONELABEL} 
\end{align} 

To define the first formulation, for each edge $e \in E$ we introduce continuous variables $\theta_e\geq 0$, which are used to measure the contribution of edge $e$ to the objective. 
Using these variables, our first formulation \first\ for the \SLP\ is given by
\begin{align} 
\first \quad SL_{\phi^*} = \min\quad &\sum_{e \in E} \theta_e  & \label{eq:obj1} \tag{F1.1} \\
&\eqref{eq:a-sum}, \eqref{eq:b-sum} & \notag \\
\theta_e & \geq k - \sum_{l<k} (k-l) (x^l_{i}+x^l_{i'}) \label{eq:opt} & \quad \forall k\in\{1,\ldots,|V|\}|,\; \forall e=\{i,i'\} \in E \tag{F1.2} \\
 x^k_{i} & \in\{0,1\}&\; \forall i,k\in\{1,\ldots,|V|\}. \notag 
\end{align}
Constraints~\eqref{eq:opt} ensure that for each edge $e$ the correct contribution is counted in the objective function: For an edge $e=\{i,i'\}$, let $k^1$ and $k^2$ (assume wlog $k^1<k^2$) be the labels of the two end-nodes $i$ and $i'$ for a given solution encoded by $\bar{\mathbf{x}}$. 
For each $k\leq k^1$, the right-hand-side (rhs) of~\eqref{eq:opt} is $k$, for $k^1 < k < k^2$, the rhs is $k^1$, and for $k \geq k^2$, the rhs is $k^1-k^2<k^1$. 
Thus, $\theta_e$ takes the value $k^1$.

For the second model \second, instead of using continuous variables $\theta_e\geq 0$ to measure the contribution of edge $e$, we introduce binary variables $d^k_e$, with $k\in\{1,\ldots, |V|\}$, such that $d^k_e = 1$ iff the contribution of edge $e$ to the objective is $k$, $d^k_e = 0$ otherwise \blue{(as the contribution of an edge is the minimum of the label of its two end-nodes, it is enough to have these variables with $k$ up to $|V|-1$, but for ease of readability we include all of them in the formulation)}. With these variables, we can formulate the problem as follows,
\begin{align} 
\second \quad SL_{\phi^*} = \min\quad &\sum_{e \in E} \sum_{k\in\{1,\ldots,|V|\}} k d^k_e & \label{eq:obj2} \tag{F2.1} \\
&\eqref{eq:a-sum}, \eqref{eq:b-sum} & \notag \\
\sum_{k\in\{1,\ldots,|V|\}} d^k_e&=1 &\quad \forall e \in E \label{eq:e-sum} \tag{F2.2} \\
d^k_e & \leq x^k_i+x^k_{i'}  &\quad k\in\{1,\ldots,|V|\}, \forall e=\{i,i'\} \in E \label{eq:link} \tag{F2.3}\\
 x^k_{i} & \in\{0,1\}&\;i,k\in\{1,\ldots,|V|\}
 \notag \\
  d^k_{e} & \in\{0,1\}&\;\forall e \in E,\;  k\in\{1,\ldots,|V|\}.
   \notag
\end{align}
Constraints~\eqref{eq:e-sum} ensure that exactly one of the variables $d^k_e$ is taken for each edge $e$, and constraints \eqref{eq:link} make sure that one of the two end nodes of edge $e$ has label $k$, if the associated  $d^k_e$-variable is chosen as one (and thus the contribution of the edge in the objective is $k$).

We observe that $\first$ has $O(|V|^2)$ variables, and $O(|V||E|)$ constraints, while $\second$  has $O(|V|^2+ |V| |E|)$ variables, and $O(|V||E|)$ constraints. Thus $\second$ has considerably more variables, on the other hand, constraints \eqref{eq:link} are much sparser than their counterpart~\eqref{eq:opt}. Note that the integrality of the variables $d_e^k$ can be relaxed, as for fixed variables $x \in\{0,1\}^{|V|\times |V|}$ the remaining coefficient matrix in $d_e^k$ becomes totally unimodular.

Finally, the following family of inequalities is valid for \second;
\begin{theorem}
Suppose the three nodes $i,i',i'' \in V$ form a triangle $e=\{i,i'\},e'=\{i,i''\},e''=\{i',i''\}$ in $G$ and let $V' \subset \{1,\ldots,|V|\}$ be a subset of labels. Then
\begin{equation}
\sum_{k \in V'} \big(d^k_{e}+d^k_{e'}+d^k_{e''}\big)\leq 1+ \sum_{k \in V'} \big( x^k_{i}+x^k_{i'}+x^k_{i''} \big) \label{eq:valid} \tag{TRIANGLE}
\end{equation}
is valid for \second.
\end{theorem}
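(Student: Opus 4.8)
The plan is to verify the inequality directly on every integer-feasible point of \second, since a valid inequality need only be satisfied on the integer feasible region (and not merely at optimal solutions). So I would fix an arbitrary feasible $(\mathbf{x},\mathbf{d})$ and translate both sides of \eqref{eq:valid} into combinatorial quantities of the underlying labeling.

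First I would record the structure forced by \eqref{eq:a-sum} and \eqref{eq:b-sum}: the three triangle nodes $i,i',i''$ receive three pairwise distinct labels, say $a$, $b$, $c$ respectively. Next, by \eqref{eq:e-sum} each of the three edges $e,e',e''$ has exactly one associated $d$-variable equal to one, and by \eqref{eq:link} the corresponding label $k$ must be one of the two endpoint labels of that edge. I would emphasise here that this ``contribution'' need \emph{not} be the minimum of the two endpoint labels --- that only holds at optimality --- so the argument must tolerate the chosen contribution being either endpoint label.

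Then I would set $S=\{a,b,c\}\cap V'$ and rewrite both sides. The right-hand side becomes $1+|S|$, because $\sum_{k\in V'}(x^k_i+x^k_{i'}+x^k_{i''})$ counts exactly the triangle nodes whose label lies in $V'$. The left-hand side equals the number of the three edges whose chosen contribution lies in $V'$ (equivalently in $S$), since each edge contributes a single unit term under \eqref{eq:e-sum}. Thus the claim reduces to: the number of triangle edges whose contribution lies in $S$ is at most $1+|S|$.

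Finally I would dispatch this by a short case analysis on $|S|\in\{0,1,2,3\}$. The cases $|S|\in\{0,2,3\}$ are immediate, since the left-hand side is trivially bounded by $0$, $3$, $3$ while the right-hand side is $1$, $3$, $4$. The only case requiring an idea is $|S|=1$, where I must show that at most two edges can carry the single label of $V'$ as their contribution. Here the key observation --- and the one substantive step of the proof --- is that a fixed label is the label of exactly one triangle node, and that node is incident to only two of the three edges; the third edge, being disjoint from that node, does not have this label among its endpoint labels and hence, by \eqref{eq:link}, cannot select it. This yields $\mathrm{LHS}\le 2 = 1+|S|$. I expect the entire difficulty to be concentrated in correctly accounting for the fact that a contribution may be any endpoint label rather than the minimum; once that framing is in place, the counting in the $|S|=1$ case is the crux.
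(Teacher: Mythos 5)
Your proof is correct and is essentially the paper's argument viewed from the opposite direction: the paper cases on the value of the left-hand side (the only nontrivial case being when all three $d$-variables are active, where it argues that at least two of the triangle-node labels must lie in $V'$), whereas you case on $|S|$, the number of triangle-node labels in $V'$ (the only nontrivial case being $|S|=1$), and these two crux cases are contrapositives of one another. The substantive observation is identical in both — by \eqref{eq:link} an edge's selected contribution label must be one of its endpoint labels, so a single label can be selected by at most the two edges incident to the node carrying it — and your write-up is, if anything, more explicit than the paper's about the fact that at a general feasible point the contribution need not be the minimum endpoint label.
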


\begin{proof}
Clearly, the sum on the left-hand-side (lhs) can be at most three. 
If the sum is zero or one, validity is trivial and validity for two follows easily due to constraints~\eqref{eq:link}. 
Thus, suppose the sum on the lhs is three, this means all three edges of the triangle are in some of the label-levels given in $V'$. 
However, in this case, at least two of the variables on the right-hand-side must be one, as clearly not all three edges can be on the same labeling-level in any feasible solution.
\end{proof}

As there is an exponential number of~\eqref{eq:valid} inequalities, 
we do not add them in the beginning, but separate them on-the-fly, i.e., we embed their separation in \blue{a} branch-and-cut scheme; the separation is described in Section~\ref{sec:further}.
The computational results in Section~\ref{sec:resgen} reveal that these inequalities can be quite helpful in some instances. 

\subsection{Comparison between $\first$ and $\second$\label{sec:strength}}

In this section, we show that formulation $\first$ can be obtained from $\second$ by using Benders decomposition to project out the $d^k_e$-variables using Benders optimality cuts, i.e., consider a Benders master problem consisting of~\eqref{eq:a-sum},~\eqref{eq:b-sum} and variables $\theta_e$ that account for the contribution of edge $e$ to the objective function, the obtained optimality cuts are exactly inequalities~\eqref{eq:opt}.


In order to show this equivalence, we relax the integrality of the $d^k_e$-variables, 
and consider the dual of $\second$ for a fixed solution encoded by vector $\bar x \in \{0,1\}^{|V|\times |V|}$.
In the following, for ease of readability, when we refer to dual of some formulation, 
we mean the dual of the corresponding LP-relaxation (with possible some subset of the variables fixed as mentioned).
Observe that for fixed solution given by $\bar x$, 
the problem decomposes into one problem for each edge $e$.
Let $\gamma_e$ be the dual-variables associated with constraint~\eqref{eq:e-sum}, 
and let $\delta^k_e$ be the dual variables associated with constraints~\eqref{eq:link}. 
The dual $(D_{\second(\bar x,i,i')})$ of $\second$, for a given $\bar x$ and an edge $e=\{i,i'\}$, then reads as
\begin{align} 
(D_{\second(\bar x,i,i')}) \quad \max\quad  \gamma_e -\sum_{k\in \{1,\ldots, |V|\} }& (\bar x^k_i+\bar x^k_{i'}) \delta^k_ e  & \label{eq:objdual1} \tag{$D_{\second(\bar x,i,i')}.1$} \\
\gamma_e-\delta_e^k &\leq k &\quad  \forall k\in \{1,\ldots, |V|\}\label{eq:dual1-2} \tag{$D_{\second(\bar x,i,i')}.2$}  \\
\delta_e^k &\geq 0 &\quad \forall k\in \{1,\ldots, |V|\} \tag{$D_{\second(\bar x,i,i')}.3$} 
\end{align}

Note that using standard duality rules, we would obtain $\delta_e^k \leq 0$ and positive coefficients associated with these variables in the objective and constraints.
For ease of exposition, we write these dual variables as $\delta_e^k \geq 0$ by changing the objective function coefficients accordingly.

The optimal solution of $(D_{\second(\bar x,i,i')})$ can be derived as follows.
It is easy to see that in order to maximize the objective function~\eqref{eq:objdual1}, $\gamma_e$ needs to be increased.
However, due to constraints~\eqref{eq:dual1-2}, if $\gamma_e$ is increased to $k$,
to ensure feasibility we need to compensate this, by setting $\delta_e^l = k-l$,
whenever $l<k$. Due to constraints~\eqref{eq:a-sum} and~\eqref{eq:b-sum}, 
the coefficients $(\bar x^k_i+\bar x^k_{i'})$, of dual variable $\delta^k_e$,
take value of one only for two values of $k$, say $l^1, l^2$ with $l^1 < l^2$, and value of zero for
the remaining values of $k$. 
Thus, as long as $k$ is at most $k^1=l^1+1$, setting $\delta_e^l = k-l$ does not influence the objective function, 
since the coefficients of the corresponding $\delta^l_e$ variables are zero,
and the objective value obtained is $k^1$. 
For $k\in [k^1,\ldots,l^2]$, the objective function also has the value $k^1$:
If the value of $\gamma_e$ is increased by one within this interval, then the value of $\delta^{k^1}_e$,
whose objective function coefficient is minus one must also be increased by one.
Finally, starting from $k^2=l^2+1$, also $\delta^{k^2}_e$ would need to be increased by one to ensure feasibility, i.e., for each increase of $\gamma_e$ by one, 
two variables with coefficient minus one in the objective must also be increased by one. 
Thus, the optimal solution value of the dual for the given $\bar x$ is $k^1$,
and it is achieved by applying above procedure for any $k$ with $k^1\leq k <k^2$. The values of the dual variables in an optimal solution are $\gamma_e=k$ and $\delta^l_e=k-l$ for $l<k$ and zero otherwise. 
Thus, the associated Benders optimality cuts are inequalities~\eqref{eq:opt}.

\subsection{Starting and primal heuristic \label{sec:heur}}

In order to construct a feasible starting solution to initialize the branch-and-\blue{cut framework} , 
we use the greedy algorithm proposed in~\cite{fertin2017s}, which works as follows.
We pick any unlabeled node, say $i$, with maximum degree (ties broken randomly),
and we label it with the smallest available label;
then, we remove $i$ from the graph and repeat the procedure until all nodes are labeled. 
After this, we try to improve the obtained solution with a local search-phase which is based on exchanging the label of pairs of nodes. 
The complete scheme is described in Algorithm~\ref{alg:primal}. 
In the algorithm, $\phi^{-1}_H(k)$ denotes the node with label $k$. 
In the local search-phase (lines~\ref{algline:lsalpha}-\ref{algline:lsomega}), 
we exploit the fact that an exchange of labels between pairs of nodes $i,i'$ does not require
to recalculate the labeling number from scratch, as it is enough to recalculate the contribution of the edges $e\in E$ such that $e:\{i,\cdot\}$ or $e:\{i',\cdot\}$. We also do not try all label-pair-exchanges, but for a given label $k$ limit the change to labels $k'\leq min(k,maxContribLabel)$, where $maxContribLabel$ is the maximum contribution to the objective of any edge, where one end-node is labeled with $k$.

\begin{algorithm}[h!tb]   
\DontPrintSemicolon                 
\SetKwInOut{Input}{input}\SetKwInOut{Output}{output}
\Input{instance $G=(V,E)$ of the \SLP}
\Output{feasible labeling $\phi_H$ with labeling number $z^H=SL_{\phi_H}$}
$G'\gets G$\;
$z^H \gets 0$\;
\tcc{greedy algorithm phase}
\For{$1\leq k \leq |V|$}
{
$i \gets $ node with maximal degree in $G'$\;
$\phi_H(i) \gets k$\;
$G' \gets G' \setminus i$\;
\For{$\{i,i'\} \in E$ with $i'$ unlabeled}
{
$z^H \gets z^H+k$\;
}
}
\tcc{local-search phase}
$improve \gets true$\;
\Repeat{$improve=false$}
{\label{algline:lsalpha}
$improve \gets false$\;
\For{$1\leq k \leq |V|$}
{
$i \gets \phi^{-1}_H(k)$ \;
$cost^i \gets 0$\;
$maxContribLabel \gets 0$ \;
\For{ $\{i,i''\}\in E$ }
{
$cost^i \gets cost^i+\min(\phi_H(i),\phi_H(i''))$\;
$maxContribLabel \gets \max(maxContribLabel,\min(\phi_H(i),\phi_H(i'')) )$\;
}

\For{$1 \leq k' \leq \min(k,maxContribLabel)$}
{
$i' \gets \phi^{-1}_H(k')$ \;
$cost^{i'} \gets 0$\;
\For{ $\{i',i''\}\in E$ }
{
$cost^{i'} \gets cost^{i'}+\min(\phi_H(i'),\phi_H(i''))$\;
}
$costExchange \gets 0$\;
\For{ $\{i,i''\}\in E$ }
{
$costExchange \gets costExchange+\min(k',\phi_H(i''))$\;
}
\For{ $\{i',i''\}\in E$ }
{
$costExchange \gets costExchange+\min(k,\phi_H(i''))$\;
}
\If{$costExchange < cost^{i} + cost^{i'}$}
{
$z^H \gets z^H-cost^{i} - cost^{i'} + costExchange$\;
$\phi_H(i) \gets k'$\;
$\phi_H(i') \gets k$\;
$improve \gets true$\;
\textbf{break}\;
}
}

}
}\label{algline:lsomega}

\caption{Starting heuristic\label{alg:primal} }
\end{algorithm}

Based on this algorithm, we also designed a primal heuristic,
guided by the LP-relaxation, which is embedded within 
the branch-and-\blue{cut} search framework.
Let $\tilde x$ be the value of the $x$-variables of the LP-relaxation at a branch-and-\blue{cut} node. 
We solve the minimum assignment problem induced by constraints~\eqref{eq:a-sum} and~\eqref{eq:b-sum}, setting the coefficient of variable $x^k_i$ in the objective to $\tilde x^k_i$, the obtained solution gives a feasible labeling.
Afterwards, we attempt to improve the obtained solution by applying the local search-part of the heuristic described in Algorithm~\ref{alg:primal}.

\subsection{Further enhancements \label{sec:further}}

We now describe the additional ingredients that are part of the
developed branch-and-cut algorithm, namely a branching strategy, an initialization procedure and separation procedures for violated inequalities.

\paragraph{Branching}
Constraints~\eqref{eq:b-sum} (and also~\eqref{eq:a-sum} and~\eqref{eq:e-sum}) are \emph{generalized-upper-bound} (GUB) constraints (also known as \emph{special-ordered-sets}). 
It is well-known that in presence of such constraints, branching on a single variable $x_i^k$ 
may not be very efficient, as it will often lead to an ``unbalanced'' search tree:
in one part of the search tree, one variable $x_i^k$ is fixed to zero and in the other part, all other variables $x_i^{k'}, k'\neq k$ are fixed to zero. 
Instead, branching on a subset of the variables appearing in such a constraint could be more efficient, this strategy is often called GUB-branching 
(see, e.g.,~\cite{conforti2014integer,nemhauser1988integer,wolsey1998integer} or also~\cite{cho2015row} for more details on implementing such a branching-scheme). 
In our case, we implemented GUB-branching based on constraints~\eqref{eq:b-sum}, 
i.e., at every branch-and-\blue{cut} node, we select a node $i$ to branch on, and then make two children-nodes where we enforce
\begin{equation}
\sum_{k \in K} x^k_i=0 \quad \text{or} \quad \sum_{k \in  \{1,\ldots,|V|\} \setminus K} x^k_i=0,
\end{equation}
where $K \subset \{1,\ldots,|V|\}$. 
This is a valid branching scheme, as in one child, node $i$ must take a label in $ \{1,\ldots,|V|\} \setminus K$, and in the other it must take a label in $K$. 
Let $\tilde x$ be the value of the LP-relaxation at a branch-and-\blue{cut} node and suppose we already selected a node $i$. 
The partition $K$ is found by calculating $B=\lfloor \sum_{k \in  \{1,\ldots,|V|\}} k \tilde x^k_i \rfloor$ and putting every $k\leq B$ in $K$. 
To select node $i$ to branch on, for each node $i$, with fractional variables $\tilde x^k_i$, we calculate the following score $sc_i$ based on up-\emph{pseudocosts} $\psi^+(i,k)$ and down-\emph{pseudocosts} $\psi^-(i,k)$: $sc_i=\sum_{k \in  \{1,\ldots,|V|\}: \tilde x^k_i>0}\psi^+(i,k) (1- \tilde x^k_i)+\psi^-(i,k) \tilde x^k_i $, 
and select the node with the maximum score.
If the maximum score is under 0.001, we then proceed with the default branching of the branch-and-cut solver. \blue{In preliminary tests during implementation, we also briefly tried other scores, i.e., only using the $(1- \tilde x^k_i)$ or $\tilde x^k_i$ part, but did not see any improvements compared to using the sum of both parts.}	
Pseudocosts are a concept often used for branching decisions and are provided by CPLEX, 
which is the branch-and-cut solver we use (see, e.g.,~\cite{achterberg2005branching,conforti2014integer} for more on pseudocosts).

\paragraph{Initialization}
While both $\first$ and $\second$ have polynomially many constraints, for some instances, 
the size of the resulting models may still become prohibitive for efficient solving. 
Removing constraints initially, and only adding them on the fly when needed (i.e., using branch-and-cut) could be \blue{a} useful option in such a case. 
However, in preliminary computations, for formulation $\second$, such a strategy did not turn out to be computationally successful. 
This may be explained in the structure of the constraints.
Suppose that for some edge $e$, only a subset of the constraints~\eqref{eq:link} is added. Then the $d_e^k$-variable with the smallest $k$, 
for which no constraint was added will be take value of one in the resulting relaxation.
Thus, to ensure that the correct variable $d_e^{k'}$ is set to one, 
all constraints~\eqref{eq:link} with $k\leq k'$ must be present. 
On the contrary, for formulation $\first$, a branch-and-cut approach was more useful.
In this case, adding a single constraint~\eqref{eq:opt}, for each edge $e$, 
is enough for having an effect on the objective value of the resulting relaxation. 
In our initialization approach for \first, the set of initial constraints consists of all constraints~\eqref{eq:b-sum} and~\eqref{eq:a-sum}, and a single-constraint~\eqref{eq:opt} for each $e$. 
The latter are determined by heuristically solving the dual problem of $\second$ with the algorithm described in the next section. 
For each edge $e$, we add the constraint~\eqref{eq:opt} for the largest $k$, for which the dual multiplier for $d^k_e\leq x^k_i+x^k_{i'}$ is non-zero.
Separation of constraints~\eqref{eq:opt} is done by enumeration. 
During each separation round, we add one violated inequality (if there is any) for each $e$;
if there is more than one violated, we add the one with the largest violation. 
By checking the potential violation of~\eqref{eq:opt} for each $e$ increasingly by $k$, 
we do not need to re-calculate it from scratch for each $k$.

%

\paragraph{Separation of inequalities \eqref{eq:valid}}

We do not separate the inequalities for all subsets of labels $V'\subset \{1,\ldots,|V|\}$, 
but simply check for all increasing subsets, 
starting with $\{1,2\}$ until $\{1,2, \ldots |V|-1\}$. 
The separation is then done by enumeration, i.e., we enumerate all \emph{triangles} in the beginning and store them;
afterwards, during a separation round at a given node of the branch-and-\blue{cut} tree, 
we check the inequalities for each of the above mentioned subsets of $V'$. 
By performing this in an increasing order, we do not need to re-separate from scratch,
but just need to add the contribution from the current label. 
For each triangle, we stop either when a violated inequality is found, 
or when the sum of fractional variables on the right-hand-side of the inequality has reached the value of three (since it cannot grow larger than that).

\section{Analyzing the MIP-formulation for special classes of graphs \label{sec:analyze}}

Clearly, any feasible solution to the dual of an LP-relaxation of a MIP gives a lower bound, 
which can be used in, e.g., a branch-and-bound algorithm to solve the MIP. 
Depending on the structure and size of the resulting dual, 
a high-quality dual solution can potentially be found using a combinatorial procedure instead of solving the corresponding LP model directly using, e.g., the simplex-algorithm. 
For example, for the facility location problem~\cite{erlenkotter1978dual}, 
the Steiner tree problem~\cite{wong1984dual}, 
or the network design problem~\cite{balakrishnan1989dual},
there exist so-called \emph{dual ascent} algorithms, which (heuristically) solve the dual by starting with a solution, where all variables are zero (such a solution is feasible for these problems), 
and then systematically increase the dual variables in order to get a good dual solution. 
In this section we present a combinatorial algorithm for solving $(D_{\second})$, 
which is inspired by these approaches. 
We show that for paths, cycles and perfect $n$-ary trees, 
the dual solution produced by our heuristic algorithm is in fact optimal. 


\subsection{A heuristic algorithm for solving the dual of formulation \second \label{sec:dh}}



In addition to the dual multipliers defined in the previous section, 
let $\alpha_k$ be the dual multipliers associated with~\eqref{eq:a-sum},
and let $\beta_i$ be the dual multipliers associated with~\eqref{eq:b-sum}. 
The dual of \second\, denoted as $(D_{\second})$, 
is as follows (again changing the coefficients of $\delta_e^k$ to get $\delta_e^k \geq 0$);
\begin{align} 
(D_{\second}) \quad \max\quad  \sum_{k\in \{1,\ldots,|V| \}} \alpha_k+\sum_{i \in V} \beta_i+\sum_{e \in E}\gamma_e & \label{eq:objdual2} \tag{$D_{\second}.1$} \\
\alpha_k+\beta_i+\sum_{e:\{i,\cdot\} \in E} \delta_e^k &\leq 0 &\quad k\in \{1,\ldots,|V|\}, \forall i \in V \label{eq:dual2-1} \tag{$D_{\second}.2$}  \\
\gamma_e-\delta_e^k &\leq k &\quad \forall e \in E, k\in \{1,\ldots,|V|\}\label{eq:dual2-2} \tag{$D_{\second}.3$}  \\
\delta_e^k &\geq 0 &\quad \forall e \in E, k\in \{1,\ldots,|V|\} \tag{$D_{\second}.4$} 
\end{align}


Algorithm~\ref{alg:dual} outlines a (greedy) heuristic to solve $(D_{\second})$. We denote the degree of a node $i$ with $\delta(i)$. 
While Algorithm~\ref{alg:dual} is a heuristic for general graphs, it gives the optimal dual solution for paths, 
cycles and perfect $n$-ary trees due to the particular structure of these graphs. 
At the end of the section, we discuss an extended version of the algorithm, 
which can produce better dual bounds for other graphs and also give a small example of this. 
Note that in both variants of our algorithm the dual variables are always kept integral, 
so in case the optimal dual solution has fractional values, it cannot be achieved with our algorithms.

\begin{algorithm}[h!tb]   
	\DontPrintSemicolon                 
	\SetKwInOut{Input}{input}\SetKwInOut{Output}{output}
	\Input{instance $G=(V,E)$ of the \SLP}
	\Output{feasible solution $(\alpha,\beta,\gamma,\delta)$ of $(D_{\second})$ with value $z^D$}
	$(\boldsymbol{\alpha,\beta,\delta}) \gets 0, (\boldsymbol{\gamma}) \gets 1, z^D \gets |E|$ \;
	$\Delta=\max_{i \in V} \delta(i)$\;
	\For{$\bar k\in \{1,\ldots,|V|\} $}
	{
		\eIf{$|E|-\bar k \cdot \Delta>0$}
		{
			$z^D \gets z^D+|E|-\Delta$ \;
			\For{$k \leq \bar k$}
			{
				$\alpha_k \gets \alpha_k+\Delta$\;
				\For{$e \in E$}
				{
					$\delta^k_e \gets \delta^k_e+1$\;
				}
			}
			\For{$e \in E$}
			{
				$\gamma_e \gets \gamma_e+1$\;
			}
		}
		{
			\textbf{break}
		}
	}
	\caption{Heuristic solution procedure for the dual $(D_{\second})$\label{alg:dual} }
\end{algorithm}

We start with $(\boldsymbol{\alpha,\beta,\delta})$ set to zero, and all $\gamma_e$ set to one, which is a feasible solution, and we iteratively try to increase the values of some $\gamma_e$ (which appear with coefficient one in the objective function \eqref{eq:objdual2}) by one. Due to constraints \eqref{eq:dual2-1} and \eqref{eq:dual2-2}, increasing some $\gamma_e$ by one implies the increase of some $\delta^k_e$ and also the decrease of $\alpha_k$ or $\beta_i$ to preserve feasibility of the solution. In our algorithm, we only decrease $\alpha_k$ and keep $\beta_i$ at zero. As $\alpha_k$ appears with a coefficient of minus one in the objective function, our goal is to iteratively set the variables in such a way, that the net-change in the objective is positive at each step, and we stop, when the change is non-positive.

The key observations used in the design of the algorithm are the following: i) when we want to increase some $\gamma_e$ from $\bar k$ to $\bar k+1$, all $\delta^k_e$ with $1 \leq k \leq \bar k+1$ need to be increased by one to keep feasibility, which in turn implies decreasing by one all $\alpha_k$ with $1 \leq k \leq \bar k+1$; ii) when $\alpha_k$ is set to some value $\bar \alpha_k$, for all adjacent edges $e$ of any node $i$ we can set $\bar \alpha_k$ variables $\delta^k_e$ to one and the solution remains feasible.

In our algorithm, we start with $\bar k=1$ and a step consists of setting $\alpha_k$, $\delta^k_e$ and $\gamma_e$ for all $k \leq \bar k$ and some or all $e \in E$, then we proceed to the next step with $\bar k=\bar k+1$. In the simplified version, we set $\alpha_k$ always to the maximum degree $\Delta$ of a node in the graph. Thus, we can always increase all $\gamma_e$ at a step (as all $\delta^k_e$ can be increased), i.e., the positive change in the objective due to $\gamma_e$ is $|E|$ at every step. Moreover, the negative change in the objective at every step due to setting the $\alpha_k$ is $\Delta \cdot \bar k$, as all $\alpha_k$ up to $\bar k$ need to be increased by $\Delta$ to keep the feasibility. Thus, the algorithm stops, when $\bar k \cdot \Delta \geq |E|$, and at each step before, the dual objective gets increased by $|E|-\bar k  \cdot \Delta > 0$. Observe that the increase gets smaller at every step, as $\Delta \cdot \bar k$ grows with every step. The runtime of Algorithm \ref{alg:dual} is $O(|V|^2|E|)$

In the extended version of the algorithm, we do not just increase by $\Delta$ at a step, but try all the values up to $\Delta$. If $\alpha_k$ is set to a value $\bar \alpha_k$ lower than $\Delta$, for some $i \in V$, not all $\delta^k_e$ with $i \in e$ can be increased, which in turn means not all $\gamma_e$ can be increased. Thus, a subset of $e$ to increase needs to be chosen in these cases. We do this in a heuristic fashion: A list \texttt{active} of \emph{active edges} is kept, and only such edges are considered for the remaining steps (i.e., increasing $\gamma_e$, $\delta^k_e$ and calculation of $\Delta$ which is re-calculated before every step considering only active edges).
For each $\bar \alpha_k < \Delta$, we do the following:
Make a temporary copy \texttt{active$(\bar \alpha_k)$} of \texttt{active}. Let $\delta^a(i)$ be the degree of node $i$ considering only edges in \texttt{active}$(\bar \alpha_k)$. 
We sort the nodes by $\delta^a(i)$, and then for each node $i$, we sort the adjacent active edges $e=\{i,i'\} \in$ \texttt{active$(\bar \alpha_k)$} by decreasing $\delta^a(i')$. We iterate through this list and set edges to be inactive in \texttt{active$(\bar \alpha_k)$} until $\delta^a(i)\leq  \bar \alpha_k$ and then proceed to the next node. After this procedure, increasing all $\gamma_e$ in \texttt{active$(\bar \alpha_k)$} (and also increasing resp., decreasing the associated $\delta^k_e$, resp., $\alpha_k$, $k \leq \bar k$) by one gives a feasible solution with a net-change $|$\texttt{active$(\bar \alpha_k)$}$|-\bar k \cdot \bar \alpha_k$ in the objective. At each step, we choose the value $\bar \alpha_k$ giving the largest positive net-change and set the dual variables and \texttt{active} accordingly (in case of ties, we take the smallest $\bar \alpha_k$ among the values giving the largest positive net-change).
 We stop, when there is no $\bar \alpha_k$ giving positive net-change at a step $\bar k$ (observe that as in the simplified variant, the net-change gets smaller in every step).  The runtime of the extended version is $O(\Delta|V|^2|E|)$.

The following example illustrates both versions of the algorithm and also is a counter-example to Lemma 3 in \cite{fertin2015algorithmic}, which claims that a node with maximum degree gets label one in any optimal labeling.
\begin{example}
Consider the grid graph given in Figure \ref{fig:gridexample}. The graph has $|V|=9$, $|E|=12$ and $\Delta$ is four and achieved by the node ``E''. An optimal labeling $\phi^*$ is given in Figure \ref{fig:gridsolution} with $SL_{\phi^*}=30$. Note that only the nodes with label one, two, three and four contribute to the objective and node ``E'' does not have label one. Moreover, for this instance, when solving the LP-relaxation of \second\ we do not obtain $SL_{\phi^*}$ but $29.6667$, i.e., \second\ has an integrality gap for this instance.

\tikzstyle{vertex}=[circle,fill=black!15,minimum size=20pt,inner sep=0pt]
\tikzstyle{edge} = [draw,thick,-]
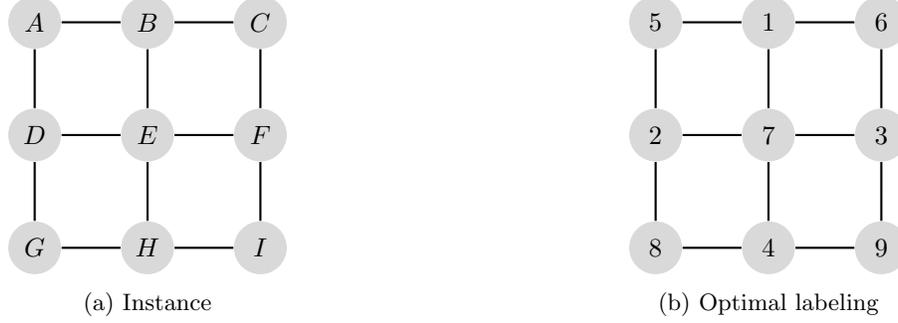
\begin{figure}[h!tb]
\begin{subfigure}[b]{.5\linewidth}
\centering
\begin{tikzpicture}[scale=1.5]
\foreach \pos/\name/\type in {{(0,2)/A/vertex}, {(1,2)/B/vertex}, {(2,2)/C/vertex},{(0,1)/D/vertex},{(1,1)/E/vertex},{(2,1)/F/vertex},{(0,0)/G/vertex},{(1,0)/H/vertex},{(2,0)/I/vertex}}
        \node[\type] (\name) at \pos {$\name$};
\foreach \source/ \dest in  
    {A/B,B/C,A/D,D/E,B/E,C/F,E/F,D/G,G/H,E/H,H/I,F/I}
    \path[edge] (\source) --  (\dest);
\end{tikzpicture}
\caption{Instance}\label{fig:gridexample}
\end{subfigure}%
\begin{subfigure}[b]{.5\linewidth}
\centering
\begin{tikzpicture}[scale=1.5]
\foreach \pos/\name/\type in {{(0,2)/5/vertex}, {(1,2)/1/vertex}, {(2,2)/6/vertex},{(0,1)/2/vertex},{(1,1)/7/vertex},{(2,1)/3/vertex},{(0,0)/8/vertex},{(1,0)/4/vertex},{(2,0)/9/vertex}}
        \node[\type] (\name) at \pos {$\name$};
\foreach \source/ \dest in  
    {A/B,B/C,A/D,D/E,B/E,C/F,E/F,D/G,G/H,E/H,H/I,F/I}
    \path[edge] (\source) --  (\dest);
\end{tikzpicture}

\caption{Optimal labeling}\label{fig:gridsolution}
\end{subfigure}
\caption{Grid-graph instance and an optimal labeling $\phi^*$ for it, $SL_{\phi^*}=3\cdot 1+3\cdot 2+3\cdot 3+3\cdot 4=30$. }\label{fig:grid}
\end{figure}

The simplified variant of our algorithm proceeds as follows:
\begin{enumerate}
\item $\alpha_1=4$, $z^D=20$ (as $|E|-\bar k \cdot \Delta=12-1 \cdot 4=8$)
\item $\alpha_2=4, \alpha_1=8$, $z^D=24$ (as $|E|-\bar k \cdot \Delta=12-2 \cdot 4=4$)
\item try $\alpha_3=4, \alpha_2=8, \alpha_1=12$, terminate as $|E|-\bar k \cdot \Delta=12-3 \cdot 4=0$
\end{enumerate}
The extended variant of our algorithm proceeds as follows:
\begin{enumerate}
\item $\alpha_1=3$, $active_{BE}=false$, $z^D=20$ (as $|active|-\bar k \cdot \delta^a=11-1 \cdot 3=8$)
\item $\alpha_2=3, \alpha_1=6$, $active_{BE}=false$, $z^D=25$ (as $|active|-\bar k \cdot \delta^a=11-2 \cdot 3=5$)
\item $\alpha_3=3, \alpha_2=6, \alpha_1=9$, $active_{BE}=false$, $z^D=27$ (as $|active|-\bar k \cdot \delta^a=11-3 \cdot 3=2$)
\end{enumerate}
Thus, the extended variant gives a better objective value of the dual solution.
\end{example}

\subsection{Paths and Cycles}

In \cite{fertin2009s,fertin2015algorithmic,fertin2017s}, the following closed formulas for $SL_{\phi^*}$ for paths $P_{n}$ and cycles $C_{n}$ with $n$ nodes are given without proof: $SL_{\phi^*}(C_n)=SL_{\phi^*}(P_{n+1})=\frac{n^2}{4}+\frac{n}{2}$ if $n$ is even and $SL_{\phi^*}(C_{n})=SL_{\phi^*}(P_{n+1})=\frac{(n+1)^2}{4}$ if $n$ is odd. We observe that paths are a special variant of caterpillar graphs, thus \cite{fertin2015algorithmic,fertin2017s} give a polynomial-time algorithm for paths. In Algorithm \ref{alg:path}, we give a linear-time algorithm for paths and in Algorithm \ref{alg:cycle} we do the same for cycles. We show that the objective value of the solution obtained by the respective algorithm (which is of course feasible for \second\, since it is a labeling) is the same value as the objective value of the solution for $D_{\second}$ produced by Algorithm \ref{alg:dual}. By strong duality this implies, that the solution is optimal and also means, that for paths and cycles, \second\ has no integrality gap.

\begin{algorithm}[h!tb]   
\DontPrintSemicolon                 
\SetKwInOut{Input}{input}\SetKwInOut{Output}{output}
\Input{instance $G=(V,E)$ of the \SLP, with $G$ being a path}
\Output{optimal labeling $\phi^*(G)$}
start from an end of the path and label each even node with the smallest unused label \;
label the odd nodes arbitrarily\;
\caption{Optimal solution algorithm for the \SLP\ when the instance is a path\label{alg:path} }
\end{algorithm}

\begin{theorem}
Suppose $G$ is a path. Let $z^P$ be the solution value obtained by Algorithm \ref{alg:path} and $z^D$ be the solution value of $(D_{\second})$ for the dual solution obtained by Algorithm \ref{alg:dual}. We have $z^P=z^D$.
\end{theorem}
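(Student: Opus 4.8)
The plan is to compute both quantities in closed form as functions of the number of nodes $m=|V|$ and check that they coincide; splitting on the parity of $m$ looks unavoidable because of the floor/ceiling terms involved. I would assume $m\geq 3$, so that $\Delta=2$ and $|E|=m-1$, and dispose of the trivial cases $m\le 2$ by direct inspection.

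First I would analyze Algorithm~\ref{alg:path}. Labelling the nodes $v_1,\dots,v_m$ along the path, the ``even'' nodes are $v_2,v_4,\dots$, of which there are $\lfloor m/2\rfloor$, and they receive exactly the labels $1,2,\dots,\lfloor m/2\rfloor$ in the order they are visited, so $v_{2j}$ gets label $j$. The crucial point is that a path is bipartite with the even and odd nodes as its two sides, so every edge joins an even node to an odd node; since every even label is at most $\lfloor m/2\rfloor$ while every odd label is strictly larger, the contribution $\min\{\phi(u),\phi(v)\}$ of each edge equals the label of its even endpoint. Reading the edges in order along the path as $e_1,\dots,e_{m-1}$, the edge $e_i$ is incident to $v_{2\lceil i/2\rceil}$, hence contributes $\lceil i/2\rceil$, so $z^P=\sum_{i=1}^{m-1}\lceil i/2\rceil$. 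Evaluating this arithmetic sum gives $z^P=\tfrac{m^2-1}{4}$ when $m$ is odd and $z^P=\tfrac{m^2}{4}$ when $m$ is even.

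Next I would evaluate $z^D$ produced by Algorithm~\ref{alg:dual}. With $\Delta=2$, the accounting already carried out in Section~\ref{sec:dh} tells us that step $\bar k$ produces a net change of $|E|-\bar k\Delta$ in the objective and that the loop halts once $|E|-\bar k\Delta\le 0$; hence it runs for $\bar k=1,\dots,K$, where $K$ is the largest integer strictly below $(m-1)/2$, and $z^D=|E|+\sum_{\bar k=1}^{K}\big((m-1)-2\bar k\big)$. For $m$ odd one has $K=(m-3)/2$ and for $m$ even $K=m/2-1$; substituting and simplifying the sum yields $z^D=\tfrac{m^2-1}{4}$ for $m$ odd and $z^D=\tfrac{m^2}{4}$ for $m$ even, matching $z^P$ in each case and thereby establishing $z^P=z^D$. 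As a by-product this re-derives the closed formula of \cite{fertin2009s,fertin2015algorithmic,fertin2017s} (with $n=m-1$).

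The two arithmetic-sum evaluations are routine. The only place that needs genuine care is reading off the exact number of iterations $K$ of Algorithm~\ref{alg:dual} and confirming that the per-step net change is the index-dependent quantity $|E|-\bar k\Delta$ rather than a constant, so that the stopping index lines up with the parity cases; I expect this index bookkeeping, together with keeping the even/odd label separation consistent, to be the main (though not deep) obstacle.
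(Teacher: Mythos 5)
Your proposal is correct and follows essentially the same route as the paper: compute $z^P$ (each edge contributes the label of its even endpoint, giving $\tfrac{m^2-1}{4}$ or $\tfrac{m^2}{4}$ by parity) and $z^D$ (initial value $|E|$ plus the net changes $|E|-\bar k\Delta$ over the executed steps, with the same stopping index) in closed form and observe they coincide. Your explicit bipartiteness observation and the $\lceil i/2\rceil$ bookkeeping are just a slightly more formal phrasing of the paper's ``each label covers two edges'' argument, and your reading of the per-step net change as $|E|-\bar k\Delta$ matches the accounting the paper actually uses in its proof.
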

\begin{proof}
We first calculate the value for $z^P$ and then the value for $z^D$, and in both cases make a case distinction whether $|V|$ is even or odd.
\begin{itemize}
\item $z^P$
\begin{itemize}
\item $|V|$ is odd\newline
By labeling every even node, we need $\frac{|V|-1}{2}$ labels to cover every edge, and each label covers two edges (the remaining labels do not contribute to the objective). Thus, $\phi=z^P$ for this labeling is 
\begin{equation*}
2\cdot\sum_{k=1}^{\frac{|V|-1}{2}}k=\frac{|V|-1}{2}\cdot\big(\frac{|V|-1}{2}+1\big)=\frac{(|V|-1)^2}{4}+\frac{|V|-1}{2}
\end{equation*}
\item $|V|$ is even\newline
By labeling every even node, we need $\frac{|V|}{2}$ labels to cover every edge, and each label except the last covers two edges, and the last one covers one edge (the remaining labels do not contribute to the objective). Thus, $\phi=z^P$ for this labeling is 
\begin{equation*}
\frac{|V|}{2}+2\cdot\sum_{k=1}^{\frac{|V|}{2}-1}k=\frac{|V|}{2}+\big(\frac{|V|}{2}-1\big)\cdot\frac{|V|}{2}=\frac{|V|^2}{4}
\end{equation*}
\end{itemize}
\item $z^D$  \newline
In paths, we have $\Delta=2$ and $|V|=|E|+1$. 
Thus, Algorithm \ref{alg:dual} stops at step $\bar k$, when $(|V|-1)-2 \cdot\bar k \leq 0 $
\begin{itemize}
\item $|V|$ is odd\newline
 $(|V|-1)-2 \cdot\bar k \leq 0 $ when $\bar k=\frac{|V|-1}{2}$. As at each step $k$ before termination, the net-change in $z^D$ was $(|V|-1)-2 \cdot k$ and the initial dual solution has value $|E|$, we obtain 
 \begin{align*}
 & z^D=\sum_{k=0}^{\frac{|V|-1}{2}-1}\Big((|V|-1)-2 \cdot k\Big) \\
& =(|V|-1)\cdot\frac{|V|-1}{2}-2\cdot\sum_{k=0}^{\frac{|V|-1}{2}-1}k\\
&=\frac{(|V|-1)^2}{2}-\frac{|V|-1}{2}\cdot(\frac{|V|-1}{2}-1)=\frac{(|V|-1)^2}{4}+\frac{|V|-1}{2}.
  \end{align*}
 
 \item $|V|$ is even\newline
  $(|V|-1)-2 \cdot\bar k \leq 0 $ when $\bar k=\frac{|V|}{2}$. As at each step $k$ before termination, the net-change in $z^D$ was $(|V|-1)-2 \cdot\bar k$ and the initial dual solution has value $|E|$, we obtain
  \begin{align*}
  & z^D=\sum_{k=0}^{\frac{|V|}{2}-1}\Big((|V|-1)-2 \cdot k\Big)\\
  & =(|V|-1)\cdot\frac{|V|}{2}-2\cdot\sum_{k=0}^{\frac{|V|}{2}-1}k\\   
  & =\frac{|V|^2-|V|}{2}-\frac{|V|}{2}\cdot(\frac{|V|}{2}-1)=\frac{|V|^2}{4}.  
  \end{align*}
\end{itemize}
\end{itemize}
\end{proof}

\begin{algorithm}[h!tb]   
\DontPrintSemicolon                 
\SetKwInOut{Input}{input}\SetKwInOut{Output}{output}
\Input{instance $G=(V,E)$ of the \SLP, with $G$ being a cycle}
\Output{optimal labeling $\phi^*(G)$}
start with an arbitrary node and label each even node with the smallest unused label \;
label the odd nodes arbitrarily\;
\caption{Optimal solution algorithm for the \SLP\ when the instance is a cycle\label{alg:cycle} }
\end{algorithm}

\begin{theorem}
Suppose $G$ is a cycle. Let $z^P$ be the solution value obtained by Algorithm \ref{alg:cycle} and $z^D$ be the solution value of $(D_{\second})$ for the dual solution obtained by Algorithm \ref{alg:dual}. We have $z^P=z^D$.
\end{theorem}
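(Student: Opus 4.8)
The plan is to mirror the proof given for paths: I would compute $z^P$ and $z^D$ separately, in each case splitting according to the parity of $n=|V|$, and then observe that the two quantities coincide. The structural fact to keep in mind throughout is that for a cycle $|E|=|V|=n$ and every node has degree exactly two, so $\Delta=2$; this is what drives both computations, and it is the only place where the cycle differs numerically from the path (where $|E|=|V|-1$).

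For $z^P$ I would fix a traversal $v_0,v_1,\ldots,v_{n-1}$ and let Algorithm~\ref{alg:cycle} assign labels $1,2,\ldots$ to the even-indexed nodes $v_0,v_2,\ldots$. First I would record that every unlabeled (odd-indexed) node carries a label strictly larger than every labeled node, so it never attains the minimum on any incident edge; hence the value of the labeling does not depend on the arbitrary part, exactly as for paths. When $n$ is even the labeled nodes $v_0,v_2,\ldots,v_{n-2}$ form an independent set of size $n/2$ meeting every edge exactly once, each of degree two, so $z^P=2\sum_{k=1}^{n/2}k=\frac{n^2}{4}+\frac{n}{2}$. The new case is $n$ odd: the cycle is no longer bipartite, the alternating labeling uses $(n+1)/2$ nodes, and the nodes $v_0$ and $v_{n-1}$ carrying labels $1$ and $(n+1)/2$ are adjacent through the wrap-around edge $\{v_{n-1},v_0\}$. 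I would handle this by summing $2\phi(v)$ over all labeled nodes (each of degree two) and then correcting for that single edge, which is the unique one incident to two labeled nodes and is therefore counted twice; subtracting its over-count $\max(\phi(v_0),\phi(v_{n-1}))=(n+1)/2$ gives $z^P=\frac{(n+1)(n+3)}{4}-\frac{n+1}{2}=\frac{(n+1)^2}{4}$.

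For $z^D$ I would reuse the recurrence already established for Algorithm~\ref{alg:dual}: the initial solution has value $|E|=n$, step $\bar k$ contributes $|E|-\bar k\Delta=n-2\bar k$ to the objective, and the algorithm stops at the first $\bar k$ with $n-2\bar k\le 0$. Writing the initial value as the $k=0$ term, this gives $z^D=\sum_{k=0}^{K-1}(n-2k)$, with $K=n/2$ for even $n$ and $K=(n+1)/2$ for odd $n$. Evaluating these arithmetic sums yields $\frac{n^2}{4}+\frac{n}{2}$ and $\frac{(n+1)^2}{4}$ respectively, i.e.\ exactly the values obtained for $z^P$, so $z^P=z^D$ in both parities; by strong duality this also certifies optimality of Algorithm~\ref{alg:cycle} and the absence of an integrality gap for cycles. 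The main obstacle is precisely the odd case of the primal: unlike paths and unlike even cycles, an odd cycle is not bipartite, so the alternating labeling must leave one edge with both endpoints labeled, and a naive ``each label covers two edges'' count over-counts it. The crux is to isolate this single wrap-around edge and note its true contribution is the smaller of its two labels, namely $1$; everything else is routine arithmetic parallel to the path proof.
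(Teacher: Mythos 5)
Your proposal is correct and follows exactly the route the paper intends: the paper's proof of this theorem is simply ``Similar to the proof for paths,'' and you carry out that mirroring explicitly, using $|E|=|V|=n$ and $\Delta=2$ in place of $|E|=|V|-1$ on both the primal and dual sides. The one genuinely new ingredient you supply --- isolating the single wrap-around edge of an odd cycle whose two endpoints are both labeled and correcting the ``each label covers two edges'' count by subtracting $\max(\phi(v_0),\phi(v_{n-1}))=(n+1)/2$ --- is handled correctly and yields $\frac{(n+1)^2}{4}$, matching the dual value, so no gap remains.
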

\begin{proof}
Similar to the proof for paths.
\end{proof}

\subsection{Perfect $n$-ary trees}

A perfect $n$-ary tree is a rooted tree, where all internal (i.e., non-leaf) nodes have $k$ children and all leaf nodes are at the same depth $d$ \cite{perfect}. Algorithm \ref{alg:ktree} gives a linear-time algorithm to solve the \SLP\ to optimality, and Theorem \ref{thm:nary} shows that \second\ has no integrality gap for such trees and that $SL_{\phi^*}=\frac{(|V|-1)^2}{2\cdot(n+1)}+\frac{|V|-1}{2}$ in case $d$ is odd, and $SL_{\phi^*}=\frac{(|V|-1-n)^2}{2\cdot(n+1)}+\frac{n\cdot(|V|-1-n)}{n+1}+\frac{|V|-1+n}{2} $ in case $d$ is even.

\begin{algorithm}[h!tb]   
\DontPrintSemicolon                 
\SetKwInOut{Input}{input}\SetKwInOut{Output}{output}
\Input{instance $G=(V,E)$ of the \SLP, with $G$ being a perfect $n$-ary tree with depth $d$}
\Output{optimal solution $\phi^*(G)$ of the \SLP}
\If{$d$ is odd}
{
take the nodes at even depths and label them with the smallest unused labels \;
arbitrarily label the remaining nodes\;
}
\Else
{
take the nodes at odd depths except the root node and label them with the smallest unused labels \;
label the root node with the smallest unused label \;
arbitrarily label the remaining nodes\;
}

\caption{Optimal solution algorithm for the \SLP\ when the instance is a perfect $n$-ary tree\label{alg:ktree} }
\end{algorithm}

\begin{theorem}\label{thm:nary}
Suppose $G$ is a perfect $n$-ary tree with depth $d$. Let $z^P$ be the solution value obtained by Algorithm \ref{alg:ktree} and $z^D$ be the solution value of $(D_{\second})$ for the dual solution obtained by Algorithm \ref{alg:dual}. We have $z^P=z^D$.
\end{theorem}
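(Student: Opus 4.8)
The plan is to evaluate $z^{P}$ and $z^{D}$ in closed form as functions of $n$ and $d$ and check that they agree; since the labeling returned by Algorithm~\ref{alg:ktree} is feasible for \second\ and the multipliers returned by Algorithm~\ref{alg:dual} are feasible for $(D_{\second})$, weak duality gives $z^{D}\le SL_{\phi^{*}}\le z^{P}$, so $z^{P}=z^{D}$ simultaneously certifies optimality and the absence of an integrality gap, exactly as in the path and cycle arguments. Throughout I would use that the tree has $|E|=|V|-1$ edges, that depth $j$ contains $n^{j}$ nodes, and that for $d\ge 2$ the maximum degree is $\Delta=n+1$, attained at every internal non-root node (the root has degree $n$, each leaf degree $1$); the degenerate star $d=1$, where $\Delta=n$, I would dispatch separately.

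First I would compute $z^{P}$. The structural point is that Algorithm~\ref{alg:ktree} puts the smallest labels on a single colour class of the bipartite tree — the even-depth nodes when $d$ is odd, the odd-depth nodes when $d$ is even. Such a class is both independent and a vertex cover, so each edge has exactly one endpoint in the labelled set $S$ and that endpoint carries the smaller label; hence the edge contributes precisely the label of its $S$-endpoint and $z^{P}=\sum_{v\in S}\phi(v)\,\delta(v)$. For $d$ even, $S$ is the set of $m'=\frac{n(n^{d}-1)}{n^{2}-1}$ odd-depth nodes, all of degree $n+1$, and the root (labelled right afterwards) adds nothing because its neighbours already carry smaller labels; thus $z^{P}=(n+1)\sum_{j=1}^{m'}j=(n+1)\frac{m'(m'+1)}{2}$. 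For $d$ odd, $S$ is the set of $m=\frac{n^{d+1}-1}{n^{2}-1}$ even-depth nodes; every node of $S$ has degree $n+1$ except the root, which has degree $n$, so by the rearrangement inequality $\sum_{v\in S}\phi(v)\delta(v)$ is minimized by labelling in decreasing order of degree, i.e.\ by giving the unique low-degree node, the root, the largest label $m$. This yields $z^{P}=(n+1)\frac{(m-1)m}{2}+nm$.

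Next I would compute $z^{D}$ by replaying the path analysis. With $\Delta=n+1$, the net change of the dual objective at step $\bar k$ of Algorithm~\ref{alg:dual} is $|E|-(n+1)\bar k$, and the procedure stops at the first index $K$ where this is non-positive, so $z^{D}=\sum_{k=0}^{K-1}\big(|E|-(n+1)k\big)=K|E|-(n+1)\frac{K(K-1)}{2}$. A short calculation from the node counts gives $|E|=(n+1)m'$ when $d$ is even and $|E|=(n+1)m-1$ when $d$ is odd, whence $K=m'$ in the even case and $K=\lceil m-\tfrac{1}{n+1}\rceil=m$ in the odd case. Substituting back produces $z^{D}=(n+1)\frac{m'(m'+1)}{2}$ for $d$ even and $z^{D}=(n+1)\frac{m(m+1)}{2}-m=(n+1)\frac{(m-1)m}{2}+nm$ for $d$ odd, matching the two primal values above.

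I expect the $d$-odd case to be the one that needs care, because there the edge count is not a clean multiple of $\Delta$ but satisfies $|E|=(n+1)m-1$, and correspondingly one cover node — the root — has degree $n$ instead of $n+1$. The heart of the argument is that these two ``off by one'' features produce the same correction: in $z^{D}$ the surplus term $-m$ is $K\cdot(-1)=-m$, coming from the single missing edge through the $K=m$ steps, while in $z^{P}$ the term $-m=nm-(n+1)m$ comes from the root supplying $n$ rather than $n+1$ incidences at the top label $m$. Checking that $\lceil m-\tfrac{1}{n+1}\rceil=m$ (using $0<\tfrac{1}{n+1}<1$) and that these corrections coincide is the only delicate step; the $d$-even case is routine since $|E|$ is an exact multiple of $n+1$ and every cover node has the same degree. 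Finally I would re-express $m,m'$ through $|V|$ via $|E|=|V|-1$ to recover the stated closed formula for $SL_{\phi^{*}}$, and verify the boundary value $d=1$ by hand, noting that there $K=1$ so the lone surviving term $|E|=n$ does not see the value of $\Delta$.
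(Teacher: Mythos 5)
Your proposal is correct and follows essentially the same route as the paper: evaluate $z^P$ and $z^D$ in closed form using $\Delta=n+1$, $|E|=|V|-1$ and the depth-parity bipartition of the tree, then split on the parity of $d$ and match the two expressions (weak duality doing the rest). The only discrepancy is one of convention: the paper implicitly places the root at depth $1$, so your two parity cases are swapped relative to the paper's, and your $d$-odd analysis --- where the degree-$n$ root lies in the labelled class and must be argued (via rearrangement) to receive the largest label of that class --- corresponds to the paper's $d$-even case, in which Algorithm~\ref{alg:ktree} assigns the root the last label explicitly, so no such argument is needed there.
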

\begin{proof}
We first calculate the value for $z^P$ and then the value for $z^D$, and in both cases make a case distinction whether $d$ is even or odd.
\begin{itemize}
\item $z^P$
\begin{itemize}
\item $d$ is odd \newline
The nodes on the even depths cover every edge in the graph exactly once and each node covers $n+1$ edges (the edges going to its $n$ children nodes, and the edge going to its parent node). Thus only these nodes contribute to the objective function and $\phi=z^P$ for this labeling is
  \begin{align*}
 (n+1) \cdot \sum_{k=1}^{\frac{|V|-1}{n+1}} k
 =\frac{(n+1)}{2}\frac{|V|-1}{n+1} \cdot \big(\frac{|V|-1}{n+1}+1\big)
 =\frac{(|V|-1)^2}{2\cdot(n+1)}+\frac{|V|-1}{2}
   \end{align*}
\item $d$ is even \newline
The nodes on the odd depths cover every edge in the graph exactly once and each node except the root node covers $n+1$ edges (the edges going to its $n$ children nodes, and the edge going to its parent node), while the root node covers $n$ edges (the edges going to its $n$ children nodes). Thus only these nodes contribute to the objective function and $\phi=z^P$ for this labeling is
\begin{align*}
& (n+1) \cdot \sum_{k=1}^{\frac{|V|-1-n}{n+1}}k+n \cdot \big(\frac{|V|-1-n}{n+1}+1 \big) =\frac{n+1}{2} \cdot \frac{|V|-1-n}{n+1}\cdot \big( \frac{|V|-1-n}{n+1}+1\big) +n \cdot \big(\frac{|V|-1-n}{n+1}+1 \big) \\
&=\frac{(|V|-1-n)^2}{2\cdot(n+1)}+\frac{|V|-1-n}{2}+\frac{n\cdot(|V|-1-n)}{n+1}+n=\frac{(|V|-1-n)^2}{2\cdot(n+1)}+\frac{n\cdot(|V|-1-n)}{n+1}+\frac{|V|-1+n}{2}
\end{align*}

\end{itemize}
\item $z^D$ \newline
We have $\Delta=n+1$ and $|V|=|E|+1$. Thus, Algorithm \ref{alg:dual} stops at step $\bar k$, when $(|V|-1)-(n+1)\cdot \bar k \leq 0$
\begin{itemize}
\item $d$ is odd \newline
 $(|V|-1)-(n+1)\cdot \bar k \leq 0$ when $\bar k=\frac{|V|-1}{n+1}$.  As at each step $k$ before termination, the net-change in $z^D$ was $(|V|-1)-(n+1) \cdot k$ and the initial dual solution has value $|E|$, we obtain 
 \begin{align*}
 &z^D=\sum_{k=0}^{\frac{|V|-1}{n+1}-1} \Big((|V|-1)-(n+1) \cdot k\Big)\\
 &=(|V|-1) \cdot \frac{|V|-1}{n+1} - (n+1) \sum_{k=0}^{\frac{|V|-1}{n+1}-1} k \\
 &=\frac{(|V|-1)^2}{n+1}- \frac{n+1}{2}\big(\frac{|V|-1}{n+1}-1\big)\cdot \frac{|V|-1}{n+1}\\
 &=\frac{(|V|-1)^2}{2\cdot(n+1)}+\frac{|V|-1}{2}
  \end{align*}
\item $d$ is even \newline
$(|V|-1)-(n+1)\cdot \bar k \leq 0$ when $\bar k=\frac{|V|-1-n}{n+1}+1$. As at each step $k$ before termination, the net-change in $z^D$ was $(|V|-1)-(n+1) \cdot k$ and the initial dual solution has value $|E|$, we obtain 
 \begin{align*}
 &z^{D}=\sum_{k=0}^{\frac{|V|-1-n}{n+1}}\Big((|V|-1)-(n+1) \cdot k\Big)\\
 &=(|V|-n+n-1) \cdot \Big(\frac{|V|-1-n}{n+1}+1\Big)- (n+1) \sum_{k=0}^{\frac{|V|-1-n}{n+1}} k \\
  &=\frac{(|V|-1-n)^2}{n+1}+\frac{n\cdot(|V|-1-n)}{n+1}+ |V|-1 - \frac{n+1}{2}\cdot\Big(\frac{|V|-1-n}{n+1}+1\Big)\cdot \frac{|V|-1-n}{n+1} \\
 &=\frac{(|V|-1-n)^2}{2\cdot(n+1)}
 +\frac{n\cdot(|V|-1-n)}{n+1}+ |V|-1 -\frac{|V|-1-n}{2}\\
 &=\frac{(|V|-1-n)^2}{2\cdot(n+1)}
  +\frac{n\cdot(|V|-1-n)}{n+1}+\frac{|V|-1+n}{2}
   \end{align*}
  
\end{itemize}
\end{itemize}

\end{proof}

\section{Additional solution approaches \label{sec:alternative}}

In this section, we describe a Lagrangian heuristic and a constraint programming formulation
for the \SLP.

\subsection{Lagrangian heuristic \label{sec:lagheur}}

Let

\begin{align}
\quad z^* = \min\left\{\mathbf{c}^T\mathbf{x}\mid A\mathbf{x}\leq \mathbf{b},\; H\mathbf{x} \leq \mathbf{h}\;\text{and}\;\mathbf{x}\in \mathbb Z^n \right\},\tag{COP}\label{eq:co}
\end{align}

be combinatorial optimization problem (COP), which can be formulated as MIP 
with a set of \emph{easy} constraints $A\mathbf{x}\leq \mathbf{b}$ and \emph{complicating} constraints $H\mathbf{x} \leq \mathbf{h}$. \emph{Easy} and \emph{complicating} constraints means, that the problem \eqref{eq:co} without constraints $H\mathbf{x} \leq \mathbf{h}$ is easy to solve, e.g., using a combinatorial algorithm.
Lagrangian relaxation (see,~e.g.,~\citep{fisher1981lagrangian,wolsey1998integer}) is an attractive way to solve solve such problems.

Let $\boldsymbol \lambda \geq 0$ be a vector of \emph{dual multipliers} for $H\mathbf{x} \leq \mathbf{h}$. The Lagrangian relaxation of \eqref{eq:co} for a given $\boldsymbol \lambda$ is defined as 
\begin{align}
{z_R}(\boldsymbol{\lambda}) = \min\left\{\left(\mathbf{c}^T + {\boldsymbol{\lambda}}^T {H}\right)\mathbf{x} - {\boldsymbol{\lambda}}^T {\mathbf{h}} \mid A\mathbf{x} \leq \mathbf{b}\;\text{and}\;\mathbf{x}\in\mathbb Z^n \right\}.\label{eq:relaxco}\tag{LR}
\end{align}
The value ${z_R}(\boldsymbol{\lambda})$ gives a lower bound for the objective of \eqref{eq:co}, i.e., $z^*\geq {z_R}(\boldsymbol{\lambda})$, and to find the best lower bound, a maximization problem in $\boldsymbol{\lambda}$, called the \emph{Lagrangian dual problem}, 
\begin{align}
\max_{\boldsymbol{\lambda}\geq 0} {z_R}(\boldsymbol{\lambda})\label{eq:lagdual} \tag{LD}
\end{align}
is solved. 
We use a subgradient method to solve \eqref{eq:lagdual}. Within the subgradient method, the Lagrangian relaxation \eqref{eq:relaxco} gets iteratively solved for different multipliers $\boldsymbol{\lambda}$ and the best value of ${z_R}(\boldsymbol{\lambda})$ is taken as lower bound $z_{LB}$. The value of $\boldsymbol{\lambda}^{t+1}$ at iteration $t+1$ of the subgradient method,
is obtained from the current solution $\mathbf x^t$ with the help of a subgradient $\mathbf g^t$, which is calculated as $\mathbf g^t=\mathbf h-H\mathbf x^t$. We use a standard variant for updating the multipliers, which we describe in the following (see,~e.g.,~\citep{conforti2014integer,wolsey1998integer} for more details): The value of $\boldsymbol{\lambda}^{t+1}$ is calculated as $\boldsymbol{\lambda}^{t+1}=\max\{0,\boldsymbol{\lambda}^{t}-\mu^t \mathbf g^t\}$, with the step-size $\mu^t=\beta \frac{z^I-z_R\left(\boldsymbol{\lambda}^{t} \right) }{\left\| \mathbf{g}^t\right\|}$, where $z^I$ is the value of the best feasible solution found so far, and $\beta$ a given parameter in $(0,2]$. We initialize $\beta$ with two, and if there are $\tau=7$ iterations without an improvement of ${z_R}(\boldsymbol{\lambda}^t)$, we set $\beta=\beta/2$. We have an iteration limit of 500 iterations as stopping criterion, moreover, we stop when either $z_I-z_{LB}<1$, $\mu^t<10^{-5}$ or $||g^t||<10^{-6}$.


To apply Lagrangian relaxation to the \SLP, we take formulation \second, and relax constraints \eqref{eq:link}, let $\delta^k_e\geq 0$ be the associated dual multipliers. We obtain the following relaxed problem $(LR_{\second})$.

\begin{align} 
(LR_{\second})  \quad\min\quad \sum_{e \in E} \sum_{1 \leq k \leq |V|} (k+\delta^k_e) & d^k_e-\sum_{i \in V}& \sum_{1 \leq k \leq |V|} \Big(\sum_{e \in E:i \in e}    \delta^k_e \Big) x^k_i  \label{eq:obj3} \tag{$(LR_{\second}).1$} \\
&\eqref{eq:a-sum},& \eqref{eq:b-sum} \notag \\
\sum_{1 \leq k \leq |V|} d^k_e&=1 &\quad \forall e \in E \label{eq:e-sum3} \tag{$(LR_{\second}).1$} \\
 x^k_{i} & \in\{0,1\}&\;\quad 1 \leq i,k \leq |V|
 \notag \\
  d^k_{e} & \in\{0,1\}&\;\quad \forall e \in E, 1 \leq k \leq |V|
   \notag \\
   \delta^k_e & \geq 0&\;\quad \forall e \in E, 1 \leq k \leq |V|  \notag
\end{align}

For a fixed set of multipliers $\bar \delta^k_e$, it is easy to see, that the problem decomposes into a maximum assignment problem in the $x$-variables, i.e.,
\begin{align} 
\max\quad &\sum_{i \in V} \sum_{1 \leq k \leq |V|} \Big(\sum_{e \in E:i \in e} \bar \delta^k_e \Big) x^k_i  \notag \\
&\eqref{eq:a-sum}, \eqref{eq:b-sum} & \notag \\
 x^k_{i} & \in\{0,1\}&\;\quad 1 \leq i,k \leq |V|
 \notag 
\end{align}
and for each edge $e \in E$ into a simple problem, where the index $k$ with minimum objective coefficient needs to be selected, i.e., 
\begin{align} 
\min\quad \sum_{1 \leq k \leq |V|}&(k+ \bar \delta^k_e) d^k_e \notag \\
\sum_{1 \leq k \leq |V|} d^k_e&=1 \notag \\
  d^k_{e} & \in\{0,1\}\;\quad 1 \leq k \leq |V|  \notag 
\end{align}

Every solution of the assignment problem during the course of the subgradient algorithm gives a feasible labeling $\phi_H$ and we use the local-search phase of Algorithm \ref{alg:primal} and try to improve the obtained $\phi_H$. 
%
We initialize the multipliers $\delta^k_e$ with the values obtained by the extended version of the dual heuristic described in Algorithm \ref{alg:dual}. We also use inequalities \eqref{eq:valid}, by adding and relaxing all of them for the increasing subsets of the labels up to $\{1,2,\ldots, |V|-1\}$.
To generate an initial starting solution, we use Algorithm \ref{alg:primal}.

\subsection{A Constraint Programming formulation\label{sec:cp}}

Let $y_i$ be an integer variable denoting the label of node $i\in V$. Using the $min$-constraint and the $alldifferent$-constraint, \SLP\ can be formulated as constraint programming problem as follows

\begin{align}
\min \sum_{e=\{i,i'\}\in E} \min\{y_i,y_{i'}\}& \tag{C.1} \\
alldifferent(y) &\tag{C.2} \\
y_i \in \{1,2,\ldots, |V|\},& \quad \forall i \in V \tag{C.3}
\end{align}

%
%
%


\section{Computational results \label{sec:res}}

All approaches were implemented in C++, the branch-and-\blue{cut} frameworks were implemented using CPLEX 12.7, and the constraint programming solver was also implemented using the same CPLEX version. To solve the assignment problems arising as subproblems in the Lagrangian relaxation, and also in the primal heuristic of the branch-and-\blue{cut} framework, we used the algorithm available at \url{http://dlib.net/}, which implements the Hungarian method \cite{kuhn1955hungarian}. The runs were made \blue{over} an Intel Xeon E5 v4 CPU with 2.2 GHz and 3GB memory and using a single thread. 
The timelimit for a run was set to 600 seconds and all CPLEX-settings were left at default, except when solving \first, details of the changed settings in this case are given in Section \ref{sec:resgen}.

\subsection{Computational tests for special graph classes}

First, we are interested, if there may be additional graph classes, for which the LP-relaxation of our MIPs exhibits no integrality gap.
Thus we generated the following sets of graphs using the graph generators of the \texttt{NetworkX}-package \cite{hagberg-2008-exploring}. \blue{The instances are available at \url{https://msinnl.github.io/pages/instancescodes.html}}.

\begin{itemize}
\item \texttt{grid}: We generated $|V|\times|V|$ grid graphs for $|V| \in \{3,4,\ldots,12\}$ using the \texttt{grid\_graph}-function.
\item \texttt{bipartite graphs}: We generated $(n,m)$ bipartite graphs, with edge probability $p$, for $(n,m) \in \{(5,5),(10,10),(15,15),(20,20),(25,25)\}$ and $p \in \{0.25,0.5\}$ using the \texttt{random\_graph}-function of the \texttt{bipartite}-module.
\item \texttt{caterpillar}: A caterpillar is a tree, which reduces to a path, when all leaf nodes are deleted. In \cite{fertin2015algorithmic,fertin2017s} a polynomial-time algorithm for the \SLP\ on caterpillar graphs is given.
We generated caterpillars with the expected number of nodes in the path in $\{10,20,30,40,50\}$ and the probability of adding edges to the underlying path in $\{0.25,0.5\}$ using the \texttt{random\_lobster}-function (and setting the probability of adding a second level edges to zero).
\item \texttt{lobster}: A lobster graph is a tree, which reduces to a caterpillar, when all leaf nodes are deleted. We generated lobsters with the expected number of nodes in the path in $\{10,20,30,40,50\}$ and the probability of adding edges to the underlying path in $\{0.25,0.5\}$ and also the same probability of adding a second level of edges using the \texttt{random\_lobster}-function
\item \texttt{tree}: We generated trees with $|V|=\{10,15,20,25,30,35,40,50,75,100\}$ using the \texttt{random\_tree}-function
\end{itemize}


Tables \ref{ta:grid} to \ref{ta:tree} show the value of the LP-relaxation (columns $z_{LP}$) of $\second$ (recall that both formulations have the same strength, as shown in Section \ref{sec:strength}) and the value of the optimal solution (columns $z^*$), obtained by solving the MIP, a bold entry in $z_{LP}$ means there is no gap. Only for three of the tested graphs, there is a gap. Two of these three are grid graphs, and one is a lobster graph. Hence, for grid graphs and trees (as a lobster is a special case of a tree), $\second$ has an integrality gap. Interestingly, while for a lobster there is a gap, for the instances in \texttt{tree}, there is no gap. Thus, clearly caution is advised when drawing conclusions from these results regarding the integrality gap of $\second$. \blue{Regarding the instances without a gap, we observe that for caterpillars the problem is known to be polynomial-time solvable.} For bipartite graphs, it is also interesting to see that for the tested instances, there is no gap, as the complexity of the \SLP\ is still unknown for these graphs. Unfortunately, we were unable to prove results regarding the integrality gap for both graph types \blue{when using a similar approach as we did in Section \ref{sec:analyze} for paths, cycles and $n$-ary trees}. \blue{For caterpillars, this was due to the dual side, i.e., we found no rule for updating the dual multipliers within the dual ascent which gave a dual optimal solution. For bipartite graphs, this was due to both dual and also primal reasons, i.e., there is no (polynomial time) algorithm known to solve the problem (and we were not able to design one).}

\begin{table}[ht]
\caption{Values of LP-relaxation and optimal solution for special graphs \label{ta:special}}
\begin{subtable}[b]{.3\linewidth}
\centering
\begingroup\scriptsize
\begin{tabular}{l|rr}
  \toprule
  name & $z^*$ & $z_{LP}$\\ \midrule
  gridgraph3 & 30 & 29.67 \\ 
  gridgraph4 & 96 & \textbf{96} \\ 
  gridgraph5 & 242 & 241.67 \\ 
  gridgraph6 & 514 & \textbf{514} \\ 
  gridgraph7 & 972 & \textbf{972} \\ 
  gridgraph8 & 1692 & \textbf{1692} \\ 
  gridgraph9 & 2750 & \textbf{2750} \\ 
  gridgraph10 & 4254 & \textbf{4254} \\ 
    gridgraph11 & 6296 & \textbf{6296} \\ 
    gridgraph12 & 9016 & \textbf{9016} \\ 
   \bottomrule
\end{tabular}
\caption{Grid graphs \label{ta:grid}} 
\endgroup
\end{subtable}
\begin{subtable}[b]{.3\linewidth}
\centering
\begingroup\scriptsize
\begin{tabular}{l|rr}
  \toprule
  name & $z^*$ & $z_{LP}$\\ \midrule
    bipartite5-5-0.25 & 6 & \textbf{6} \\ 
    bipartite5-5-0.5 & 19 & \textbf{19} \\ 
bipartite10-10-0.25 & 85 & \textbf{85} \\ 
  bipartite10-10-0.5 & 211 & \textbf{211} \\ 
  bipartite15-15-0.25 & 254 & \textbf{254} \\ 
  bipartite15-15-0.5 & 707 & \textbf{707} \\ 
  bipartite20-20-0.25 & 880 & \textbf{880} \\ 
  bipartite20-20-0.5 & 1893 & \textbf{1893} \\ 
  bipartite25-25-0.25 & 1837 & \textbf{1837} \\ 
  bipartite25-25-0.5 & 3641 & \textbf{3641} \\ 
   \bottomrule
\end{tabular}
\caption{Bipartite graphs \label{ta:bipartite}} 
\endgroup
\end{subtable}
\begin{subtable}[b]{.3\linewidth}
\centering
\begingroup\scriptsize
\begin{tabular}{l|rr}
  \toprule
  name & $z^*$ & $z_{LP}$\\ \midrule
caterpillar10-0.25 & 41 & \textbf{41} \\ 
  caterpillar10-0.5 & 57 & \textbf{57} \\ 
  caterpillar20-0.25 & 434 & \textbf{434} \\ 
  caterpillar20-0.5 & 503 & \textbf{503} \\ 
  caterpillar30-0.25 & 306 & \textbf{306} \\ 
  caterpillar30-0.5 & 397 & \textbf{397} \\ 
  caterpillar40-0.25 & 508 & \textbf{508} \\ 
  caterpillar40-0.5 & 583 & \textbf{583} \\ 
  caterpillar50-0.25 & 682 & \textbf{682} \\ 
  caterpillar50-0.5 & 932 & \textbf{932} \\ 
   \bottomrule
\end{tabular}
\endgroup
\caption{Caterpillar graphs \label{ta:caterpillar}} 
\end{subtable}

\centering
\begin{subtable}[b]{.3\linewidth}
\begingroup\scriptsize
\begin{tabular}{l|rr}
  \toprule
  name & $z^*$ & $z_{LP}$\\ \midrule
lobster10-0.25-0.25 & 41 & \textbf{41} \\ 
  lobster10-0.5-0.5 & 72 & \textbf{72} \\ 
  lobster20-0.25-0.25 & 492 & \textbf{492} \\ 
  lobster20-0.5-0.5 & 793 & \textbf{793} \\ 
  lobster30-0.25-0.25 & 361 & \textbf{361} \\ 
  lobster30-0.5-0.5 & 551 & \textbf{551} \\ 
  lobster40-0.25-0.25 & 533 & \textbf{533} \\ 
  lobster40-0.5-0.5 & 766 & \textbf{766} \\ 
  lobster50-0.25-0.25 & 737 & \textbf{737} \\ 
  lobster50-0.5-0.5 & 1267 & 1266.5 \\ 
   \bottomrule
\end{tabular}
\endgroup
\caption{Lobster graphs \label{ta:lobster}} 
\end{subtable}
\begin{subtable}[b]{.3\linewidth}
\begingroup\scriptsize
\begin{tabular}{l|rr}
  \toprule
  name & $z^*$ & $z_{LP}$\\ \midrule
tree10 & 19 & \textbf{19} \\ 
  tree15 & 40 & \textbf{40} \\ 
  tree20 & 77 & \textbf{77} \\ 
  tree25 & 129 & \textbf{129} \\ 
  tree30 & 152 & \textbf{152} \\ 
  tree35 & 186 & \textbf{186} \\ 
  tree40 & 235 & \textbf{235} \\ 
  tree50 & 386 & \textbf{386} \\ 
  tree75 & 943 & \textbf{943} \\ 
    tree100 & 1692 & \textbf{1692} \\ 
   \bottomrule
\end{tabular}
\endgroup
\caption{Tree graphs \label{ta:tree}} 
\end{subtable}
\end{table}

\newcommand{\HB}{\texttt{HB}}
\newcommand{\RND}{\texttt{RND}}
\newcommand{\COL}{\texttt{COL}}
\newcommand{\sm}{\texttt{sm}}
\newcommand{\lag}{\texttt{lg}}

\subsection{Computational tests on general graphs \label{sec:resgen}}

In this section, we are interested in investigating the computational effectiveness of the proposed solution approaches for the \SLP\ when dealing with general graphs. The considered graphs are as follows:

\begin{itemize}
\item \HB: Graphs from the Harwell-Boeing Sparse Matrix Collection, which ``is a set of standard test matrices arising from problems in linear systems, least squares, and eigenvalue calculations from a wide variety of scientific and engineering disciplines'', see \url{https://math.nist.gov/MatrixMarket/collections/hb.html}. These graphs have for example be used when testing approaches for graph bandwidth problems, which are another type of labeling problem \cite{duarte2011grasp,rodriguez2015tabu}. Details of the number of nodes and edges of the individual graphs are given in the results-tables in columns $|V|$ and $|E|$. For testing, we partitioned the set into small and large graphs, denoted by the suffixes \sm\ and \lag\ in the names of the set. Graphs with $|V|\leq 118$ are considered as small, and all others are considered as large. There are 28 graphs \blue{in total}, and 12 are large. \blue{The instances are available at \url{https://www.researchgate.net/publication/272022702_Harwell-Boeing_graphs_for_the_CB_problem}}.
\item \RND: Random graphs with $|V|$ nodes and $|E|$ edges, generated using the \texttt{gnm\_random\_graph}-function of the \texttt{NetworkX}-package. We generated five graphs each for the following $(|V|,|E|)$-pairs: $(|V|,|E|) \in \{(50,100),(50,150),(50,200),(500,1000),(500,1500),(500,2000)\}$, the graphs with 50 nodes are considered as small (\sm) and the ones with 500 nodes as large (\lag). \blue{The instances are available at \url{https://msinnl.github.io/pages/instancescodes.html}}.
\end{itemize}

\newcommand{\FO}{\texttt{F1}}
\newcommand{\FT}{\texttt{F2}}
\newcommand{\LAG}{\texttt{LAG}}
\newcommand{\CP}{\texttt{CP}}
\newcommand{\HEUR}{\texttt{HEUR}}
\newcommand{\DUAL}{\texttt{DUAL}}
\newcommand{\FOI}{\texttt{F1I}}
\newcommand{\FOIB}{\texttt{F1IB}}
\newcommand{\FTV}{\texttt{F2V}}
\newcommand{\FTVB}{\texttt{F2VB}}

We first tested different configurations of our MIP approaches:
\begin{itemize}
\item \FO: The branch-and-\blue{cut} based on \first\ with the starting and primal heuristic described in Section \ref{sec:heur}; without the branching-scheme and initialization-scheme described in Section \ref{sec:further}
\item \FOI: Setting \FO\ and the initialization-scheme as described in Section \ref{sec:further}
\item \FOIB: Setting \FOI\ and the branching-scheme as described in Section \ref{sec:further}
\item \FT: The branch-and-\blue{cut} based on \second\ with the starting and primal heuristic described in Section \ref{sec:heur}; without the branching-scheme described in Section \ref{sec:further} and the valid inequalities \eqref{eq:valid}
\item \FTV: Setting \FT\ and the valid inequalities \eqref{eq:valid}
\item \FTVB: Setting \FTV\ and the branching-scheme as described in Section \ref{sec:further}
\end{itemize}

Figures \ref{fig:runtime} and \ref{fig:opt} show the runtime and optimality gap for these settings and the small instances (i.e., \HB-\sm\ and \RND-\sm). The optimality gap $g[\%]$ is calculated as $100 \cdot (z^D-z^*)/z^*$, where $z^D$ \blue{is} the value of the dual bound and $z^*$ is the value of the best solution found by the setting.

\begin{figure}[h!tb]
	
\centering
\begin{subfigure}[b]{.8\linewidth}
\centering \includegraphics[width=.99\linewidth]{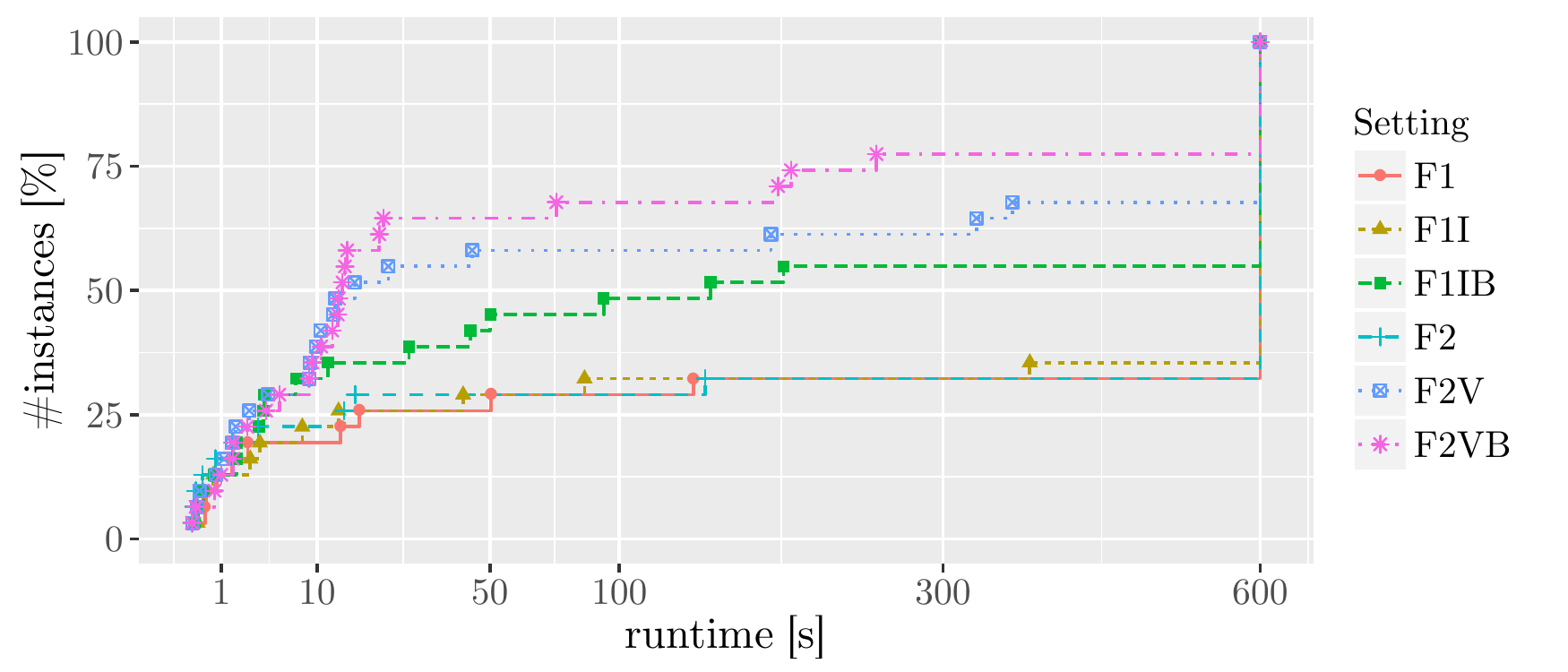}
\caption{Runtime}\label{fig:runtime}
\end{subfigure}

\centering
\begin{subfigure}[b]{.8\linewidth}
\centering \includegraphics[width=.99\linewidth]{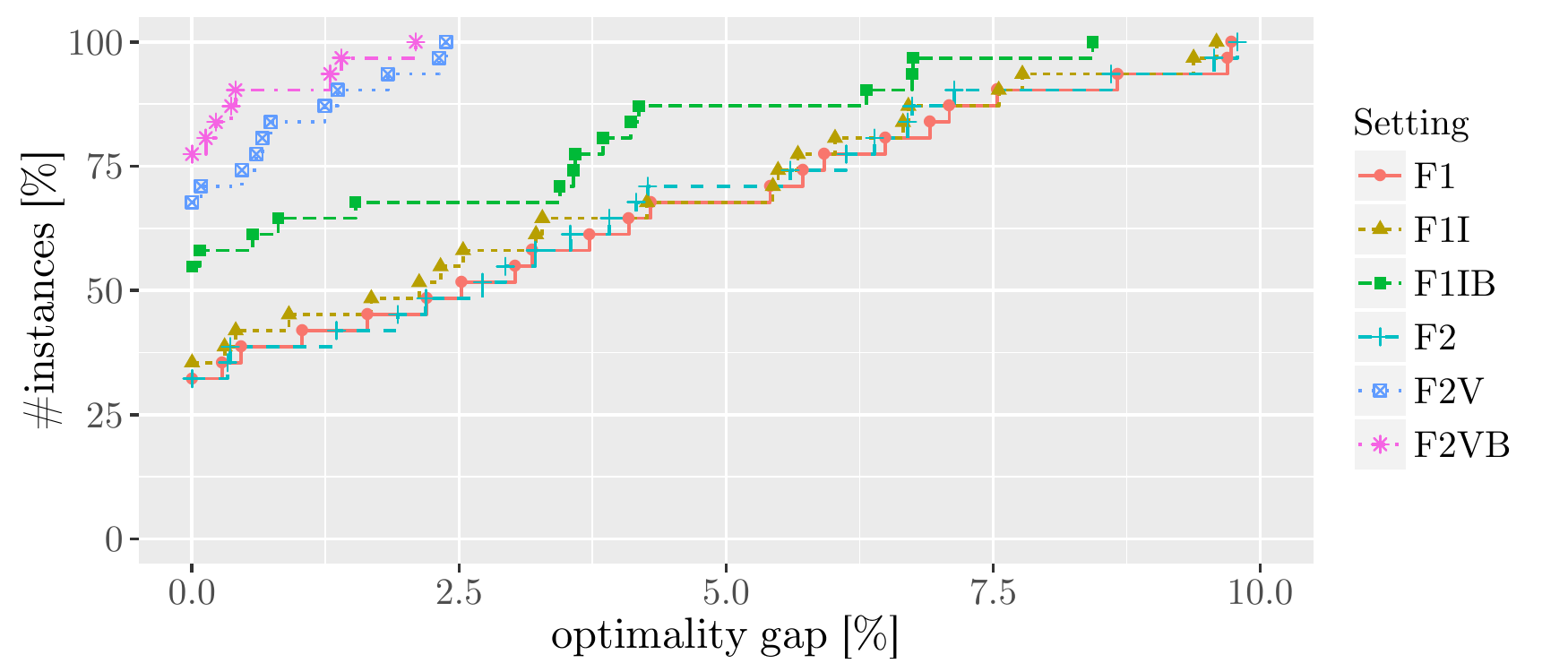}
\caption{Optimality gap}\label{fig:opt}
\end{subfigure}

\caption{Runtime and optimality gap for the small instances and different settings. \blue{Note that Figure \ref{fig:runtime} has a logarithmic scale for the runtime.}}\label{fig:small}
\end{figure}

In the figures, we see that \FTVB\ is the most effective setting, managing to solve about 75\% of the instances within the given timelimit, and the maximum gap is under 2.5\%. Settings \FTV\ and \FOIB\ also manage to solve over 50\% of the instances. In terms of optimality gap, \FTVB\ and \FTV\ are quite similar, while \FOIB\ has gaps up to 7.5\%. The remaining settings are also quite similar, with about 37.5\% of solved instances and gap of up to 10\%. Thus, the valid inequalities~\eqref{eq:valid} (used in \FTVB\ and \FTV) and the branching-scheme (used in \FTVB\ and \FOIB) seem to be quite helpful for solving the problem.  

\blue{In the remainder of this section, aside from \FOIB\ and \FTVB, we also report results for the following approaches:
\begin{itemize}
\item \DUAL: The extended version of Algorithm \ref{alg:dual}
\item \LAG: The Lagrangian heuristic described in Section \ref{sec:lagheur}
\item \CP: The Constraint Programming approach described in Section \ref{sec:cp}
\item \HEUR: The starting heuristic described in Section \ref{sec:heur}
\end{itemize}
 }

Tables \ref{ta:hblsm} and \ref{ta:rndsm} give detailed results for these approaches. We note that the values $z_D$ of \DUAL\ and $z^*$ of \HEUR\ are always worse than the corresponding values obtained by \FOIB, \FTVB, \LAG, as the latter are initialized based on \DUAL\ and \HEUR\ (see Sections \ref{sec:heur}, \ref{sec:further}, \ref{sec:lagheur}). For \FOIB\ and \FTVB\ we also report the LP-bound at the root node in column $z_R$. Bold entries in the columns mean that the method found the best values of $z_D$ and $z^*$ amongst all considered methods. We see that on the primal side (i.e., $z^*$), \FOIB, \FTVB, \LAG\ all find the best solution value for all instances, while \CP\ and the stand-alone heuristic \HEUR\ only find the best solution value for some instances. Looking at the runtime, \LAG\ takes at most five seconds, compared with the MIP-based methods \FOIB\ and \FTVB, which sometimes run until the timelimit, thus \LAG\ can be a good option, if one wants to find a good solution value quickly. Looking at the dual side, we see that \FTVB\ provides the best lower bound for all instances and the root-bound $z_R$ is better than the root-bound of \FOIB\ for nearly all instances, so the valid inequalities \eqref{eq:valid} seem quite helpful. \LAG, which also uses \eqref{eq:valid} sometimes has better bounds than $z_R$ of \FOIB\ (the MIP-approaches also benefit from the general-purpose cuts of CPLEX). The bounds provided by \DUAL\ are about 10\% worse than $z_R$ of \FOIB\ for most of the instances, however, for some they are also considerably smaller (e.g., instance \texttt{D-bcsstk01}), and for instance \texttt{rgg010} \DUAL\ even manages to find the LP-bound. Regarding the runtime for the MIP-approaches, the size of the instance seems to be important, as for all except one of the largest instances (i.e., the ones with $|E|=200$) of \RND-\sm, both \FOIB\ and \FTVB\ terminate due to the timelimit, and also for \HB-\sm, there is a connection between termination due to timelimit and number of edge\blue{s} of an instance.

\setlength{\tabcolsep}{4pt}

\begin{landscape}
\begin{table}[h!tb]
\centering
\caption{Results for \blue{instances of the class} \HB-\sm \label{ta:hblsm}} 
\begingroup\scriptsize
\begin{tabular}{lll|rrrrr|rrrrr|rrrr|rr|rr|rr}
  \toprule
   & & & \multicolumn{5}{c}{\FOIB} & \multicolumn{5}{c}{\FTVB}  & \multicolumn{4}{c}{\LAG} & \multicolumn{2}{c}{\CP} & \multicolumn{2}{c}{\HEUR} & \multicolumn{2}{c}{\DUAL}\\ name & $|V|$ & $|E|$ & $z_D$ & $z^*$ & $g [\%]$ & $t [s]$ & $z_R$ & $z_D$ & $z^*$ & $g [\%]$ & $t [s]$  & $z_R$ & $z_D$ & $z^*$ & $g [\%]$ & $t [s]$ & $z^*$ & $t [s]$ & $z^*$ & $t [s]$ & $z_D$ & $t [s]$ \\ \midrule
A-pores-1 & 30 & 103 & 811.4 & \textbf{818} & 0.81 & \textit{TL} & 744.00 & \textbf{818} & \textbf{818} & 0.00 & 14 & 807.77 & 756.27 & \textbf{818} & 7.55 & 0 & \textbf{818} & \textit{TL} & 832 & 0 & 723 & 0 \\ 
  B-ibm32 & 32 & 90 & \textbf{651} & \textbf{651} & 0.00 & 2 & 626.50 & \textbf{651} & \textbf{651} & 0.00 & 2 & 651.00 & 613.16 & \textbf{651} & 5.81 & 0 & \textbf{651} & \textit{TL} & \textbf{651} & 0 & 584 & 0 \\ 
  can-24 & 24 & 68 & 418.5 & \textbf{425} & 1.53 & \textit{TL} & 374.50 & \textbf{425} & \textbf{425} & 0.00 & 12 & 417.48 & 390.88 & \textbf{425} & 8.03 & 0 & \textbf{425} & \textit{TL} & \textbf{425} & 0 & 359 & 0 \\ 
  C-bcspwr01 & 39 & 46 & \textbf{332} & \textbf{332} & 0.00 & 0 & 332.00 & \textbf{332} & \textbf{332} & 0.00 & 0 & 332.00 & 328.06 & \textbf{332} & 1.19 & 0 & \textbf{332} & \textit{TL} & \textbf{332} & 0 & 328 & 0 \\ 
  D-bcsstk01 & 48 & 176 & 2037.39 & \textbf{2225} & 8.43 & \textit{TL} & 1986.34 & \textbf{2220.05} & \textbf{2225} & 0.22 & \textit{TL} & 2210.31 & 2047.87 & \textbf{2225} & 7.96 & 1 & 2226 & \textit{TL} & 2244 & 0 & 1936 & 0 \\ 
  E-bcspwr02 & 49 & 59 & \textbf{471} & \textbf{471} & 0.00 & 0 & 471.00 & \textbf{471} & \textbf{471} & 0.00 & 0 & 471.00 & 457.87 & \textbf{471} & 2.79 & 0 & \textbf{471} & \textit{TL} & \textbf{471} & 0 & 452 & 0 \\ 
  F-curtis54 & 54 & 124 & 1341 & \textbf{1342} & 0.07 & \textit{TL} & 1295.13 & \textbf{1342} & \textbf{1342} & 0.00 & 15 & 1341.00 & 1292.06 & \textbf{1342} & 3.72 & 1 & 1347 & \textit{TL} & 1347 & 0 & 1233 & 0 \\ 
  G-will57 & 57 & 127 & 1361.25 & \textbf{1369} & 0.57 & \textit{TL} & 1296.83 & \textbf{1369} & \textbf{1369} & 0.00 & 11 & 1369.00 & 1254.20 & \textbf{1369} & 8.39 & 1 & 1375 & \textit{TL} & 1379 & 0 & 1211 & 0 \\ 
  H-impcol-b & 59 & 281 & 3222.25 & \textbf{3363} & 4.19 & \textit{TL} & 3162.00 & \textbf{3349.33} & \textbf{3363} & 0.41 & \textit{TL} & 3346.62 & 3071.46 & \textbf{3363} & 8.67 & 4 & 3387 & \textit{TL} & 3378 & 0 & 3001 & 0 \\ 
  I-ash85 & 85 & 219 & 4114.28 & \textbf{4412} & 6.75 & \textit{TL} & 4071.16 & \textbf{4412} & \textbf{4412} & 0.00 & 185 & 4412.00 & 4093.26 & \textbf{4412} & 7.22 & 5 & 4600 & \textit{TL} & 4444 & 0 & 3890 & 0 \\ 
  jgl009 & 9 & 32 & \textbf{95} & \textbf{95} & 0.00 & 0 & 84.75 & \textbf{95} & \textbf{95} & 0.00 & 1 & 92.17 & 85.86 & \textbf{95} & 9.62 & 0 & \textbf{95} & 52 & \textbf{95} & 0 & 83 & 0 \\ 
  jgl011 & 11 & 49 & \textbf{175} & \textbf{175} & 0.00 & 4 & 150.75 & \textbf{175} & \textbf{175} & 0.00 & 5 & 165.73 & 151.47 & \textbf{175} & 13.45 & 0 & \textbf{175} & \textit{TL} & \textbf{175} & 0 & 147 & 0 \\ 
  J-nos4 & 100 & 247 & 5455 & \textbf{5658} & 3.59 & \textit{TL} & 5416.00 & \textbf{5539.5} & \textbf{5658} & 2.09 & \textit{TL} & 5523.50 & 5285.67 & \textbf{5658} & 6.58 & 3 & 5996 & \textit{TL} & 5667 & 0 & 5218 & 0 \\ 
  K-dwt--234 & 117 & 162 & \textbf{2169} & \textbf{2169} & 0.00 & 3 & 2169.00 & \textbf{2169} & \textbf{2169} & 0.00 & 2 & 2169.00 & 2108.79 & \textbf{2169} & 2.78 & 2 & 2172 & \textit{TL} & \textbf{2169} & 0 & 2105 & 0 \\ 
  L-bcspwr03 & 118 & 179 & \textbf{3557} & \textbf{3557} & 0.00 & 28 & 3546.25 & \textbf{3557} & \textbf{3557} & 0.00 & 13 & 3557.00 & 3440.42 & \textbf{3557} & 3.28 & 3 & 3695 & \textit{TL} & \textbf{3557} & 0 & 3418 & 0 \\ 
  rgg010 & 10 & 45 & \textbf{165} & \textbf{165} & 0.00 & 93 & 135.00 & \textbf{165} & \textbf{165} & 0.00 & 193 & 145.29 & 139.21 & \textbf{165} & 15.63 & 0 & \textbf{165} & \textit{TL} & \textbf{165} & 0 & 135 & 0 \\ 
   \bottomrule
\end{tabular}
\endgroup
\end{table}

\begin{table}[h!tb]
\centering
\caption{Results for \blue{instances of the class} \RND-\sm \label{ta:rndsm}} 
\begingroup\scriptsize
\begin{tabular}{lll|rrrrr|rrrrr|rrrr|rr|rr|rr}
  \toprule
   & & & \multicolumn{5}{c}{\FOIB} & \multicolumn{5}{c}{\FTVB}  & \multicolumn{4}{c}{\LAG} & \multicolumn{2}{c}{\CP} & \multicolumn{2}{c}{\HEUR} & \multicolumn{2}{c}{\DUAL}\\ name & $|V|$ & $|E|$ & $z_D$ & $z^*$ & $g [\%]$ & $t [s]$ & $z_R$ & $z_D$ & $z^*$ & $g [\%]$ & $t [s]$  & $z_R$ & $z_D$ & $z^*$ & $g [\%]$ & $t [s]$ & $z^*$ & $t [s]$ & $z^*$ & $t [s]$ & $z_D$ & $t [s]$ \\ \midrule
random50-100-1 & 50 & 100 & \textbf{911} & \textbf{911} & 0.00 & 1 & 911.00 & \textbf{911} & \textbf{911} & 0.00 & 1 & 911.00 & 871.84 & \textbf{911} & 4.30 & 0 & \textbf{911} & \textit{TL} & \textbf{911} & 0 & 858 & 0 \\ 
  random50-100-2 & 50 & 100 & \textbf{1053} & \textbf{1053} & 0.00 & 4 & 1033.00 & \textbf{1053} & \textbf{1053} & 0.00 & 4 & 1053.00 & 971.08 & \textbf{1053} & 7.78 & 0 & \textbf{1053} & \textit{TL} & 1062 & 0 & 956 & 0 \\ 
  random50-100-3 & 50 & 100 & \textbf{994} & \textbf{994} & 0.00 & 2 & 987.00 & \textbf{994} & \textbf{994} & 0.00 & 3 & 994.00 & 946.17 & \textbf{994} & 4.81 & 0 & 996 & \textit{TL} & 999 & 0 & 923 & 0 \\ 
  random50-100-4 & 50 & 100 & \textbf{1039} & \textbf{1039} & 0.00 & 50 & 1000.00 & \textbf{1039} & \textbf{1039} & 0.00 & 22 & 1023.50 & 962.79 & \textbf{1039} & 7.33 & 0 & \textbf{1039} & \textit{TL} & 1048 & 0 & 933 & 0 \\ 
  random50-100-5 & 50 & 100 & \textbf{978} & \textbf{978} & 0.00 & 7 & 960.00 & \textbf{978} & \textbf{978} & 0.00 & 14 & 972.50 & 909.90 & \textbf{978} & 6.96 & 0 & 985 & \textit{TL} & 990 & 0 & 887 & 0 \\ 
  random50-150-1 & 50 & 150 & \textbf{1599} & \textbf{1599} & 0.00 & 44 & 1543.50 & \textbf{1599} & \textbf{1599} & 0.00 & 9 & 1599.00 & 1487.13 & \textbf{1599} & 7.00 & 1 & 1610 & \textit{TL} & 1611 & 0 & 1447 & 0 \\ 
  random50-150-2 & 50 & 150 & 1674 & \textbf{1736} & 3.57 & \textit{TL} & 1606.00 & \textbf{1736} & \textbf{1736} & 0.00 & 250 & 1711.00 & 1554.26 & \textbf{1736} & 10.47 & 1 & 1739 & \textit{TL} & 1741 & 0 & 1520 & 0 \\ 
  random50-150-3 & 50 & 150 & \textbf{1593} & \textbf{1593} & 0.00 & 12 & 1553.70 & \textbf{1593} & \textbf{1593} & 0.00 & 9 & 1593.00 & 1484.96 & \textbf{1593} & 6.78 & 1 & 1598 & \textit{TL} & \textbf{1593} & 0 & 1469 & 0 \\ 
  random50-150-4 & 50 & 150 & \textbf{1612} & \textbf{1612} & 0.00 & 145 & 1532.50 & \textbf{1612} & \textbf{1612} & 0.00 & 21 & 1608.22 & 1461.63 & \textbf{1612} & 9.33 & 1 & 1623 & \textit{TL} & 1628 & 0 & 1421 & 0 \\ 
  random50-150-5 & 50 & 150 & \textbf{1618} & \textbf{1618} & 0.00 & 188 & 1543.00 & \textbf{1618} & \textbf{1618} & 0.00 & 13 & 1618.00 & 1510.53 & \textbf{1618} & 6.64 & 1 & 1621 & \textit{TL} & 1621 & 0 & 1473 & 0 \\ 
  random50-200-1 & 50 & 200 & 2236.5 & \textbf{2326} & 3.85 & \textit{TL} & 2181.53 & \textbf{2323} & \textbf{2326} & 0.13 & \textit{TL} & 2308.88 & 2131.75 & \textbf{2326} & 8.35 & 1 & 2331 & \textit{TL} & 2340 & 0 & 2059 & 0 \\ 
  random50-200-2 & 50 & 200 & 2314 & \textbf{2470} & 6.32 & \textit{TL} & 2254.00 & \textbf{2438.06} & \textbf{2470} & 1.29 & \textit{TL} & 2424.60 & 2233.40 & \textbf{2470} & 9.58 & 1 & 2484 & \textit{TL} & 2478 & 0 & 2152 & 0 \\ 
  random50-200-3 & 50 & 200 & 2276.5 & \textbf{2374} & 4.11 & \textit{TL} & 2209.50 & \textbf{2365.31} & \textbf{2374} & 0.37 & \textit{TL} & 2344.50 & 2153.92 & \textbf{2374} & 9.27 & 1 & 2386 & \textit{TL} & \textbf{2374} & 0 & 2089 & 0 \\ 
  random50-200-4 & 50 & 200 & 2213 & \textbf{2373} & 6.74 & \textit{TL} & 2128.00 & \textbf{2339.81} & \textbf{2373} & 1.40 & \textit{TL} & 2318.12 & 2078.76 & \textbf{2373} & 12.40 & 1 & 2381 & \textit{TL} & 2381 & 0 & 2000 & 0 \\ 
  random50-200-5 & 50 & 200 & 2244 & \textbf{2324} & 3.44 & \textit{TL} & 2155.50 & \textbf{2324} & \textbf{2324} & 0.00 & 74 & 2317.47 & 2136.98 & \textbf{2324} & 8.05 & 1 & 2332 & \textit{TL} & 2364 & 0 & 2056 & 0 \\ 
   \bottomrule
\end{tabular}
\endgroup
\end{table}

\end{landscape}

Finally, we turn our attention to the larger instances, i.e., \HB-\lag\ and \RND-\lag. For these instances, we tested \LAG, \FOI\ (due to the size of the instances, the LP-solving time becomes prohibitive for any setting based on \second), \HEUR\ and also report on the performance of \DUAL. For dealing with these larger instances with \FOI, instead of the default CPLEX setting for the LP-algorithm and pricing-strategy, we explicitly set the primal simplex algorithm with reduced-cost pricing. This turned out to be beneficial in preliminary runs and is motivated by the fact, that there are many more columns than rows in the formulation and the formulation is very dense. Note that for these larger instances and the given timelimit \FOI\ and \FOIB\ are the same, as the root node of the branch-and-\blue{cut} tree is not finished within the timelimit for any instance. Table \ref{ta:hblag} gives the results for \HB-\lag\ and Table \ref{ta:rndlag} for \RND-\lag. 

We see, that regarding primal solutions, \LAG\ find the best solution value for all instances, while \HEUR\ for none and \FOI\ only for one, so the Lagrangian approach seems to be quite helpful for finding good primal solutions. However, compared to the small instances, \LAG\ now becomes more time consuming and for instances of \HB-\lag\ some runs terminate due to the timelimit. Regarding dual bounds, for instance class \HB-\lag\, \FOI\ finds the best bound for five instances, and \LAG\ for seven. \blue{On} the other hand, for \RND-\lag\, \FOI\ finds the best bound for all instances, while \LAG\ for none. The gaps provided by both \FOI\ and \LAG\ are comparable and between 3\% and 18\%, for \HB-\lag, \LAG\ seems slightly better and for \RND-\lag\ \FOI. The dual bounds provided by \DUAL\ are not far from the bounds provided by \FOI\ and \LAG, while the runtime is much faster (i.e., at most one second). Thus, a branch-and-bound based on a dual heuristic like \DUAL\ could be interesting to explore in further work.

\begin{table}[ht]
\centering
\caption{Results for \blue{instances of the class} \HB-\lag \label{ta:hblag}} 
\begingroup\scriptsize
\begin{tabular}{lll|rrr|rrrr|rr|rr}
  \toprule
   & & & \multicolumn{3}{c}{\FOI} & \multicolumn{4}{c}{\LAG} & \multicolumn{2}{c}{\HEUR} & \multicolumn{2}{c}{\DUAL}\\ name & $|V|$ & $|E|$ & $z_D$ & $z^*$ & $g [\%]$ & $z_D$ & $z^*$ & $g [\%]$ & $t [s]$ & $z^*$ & $t [s]$ & $z_D$ & $t [s]$ \\ \midrule
M-bcsstk06.mtx & 420 & 3720 & 309263.31 & 377439 & 18.06 & \textbf{323231.00} & \textbf{376169} & 14.07 & \textit{TL} & 383166 & 0 & 306337 & 2 \\ 
  N-bcsstk07.mtx & 420 & 3720 & 309263.31 & 377439 & 18.06 & \textbf{323231.00} & \textbf{376169} & 14.07 & \textit{TL} & 383166 & 0 & 306337 & 2 \\ 
  O-impcol-d.mtx & 425 & 1267 & \textbf{97924.49} & 103300 & 5.20 & 94650.20 & \textbf{102501} & 7.66 & 277 & 103312 & 0 & 93511 & 0 \\ 
  P-can--445.mtx & 445 & 1682 & 170217.77 & 197273 & 13.71 & \textbf{178140.00} & \textbf{196762} & 9.46 & \textit{TL} & 198158 & 0 & 169953 & 0 \\ 
  Q-494-bus.mtx & 494 & 586 & \textbf{43972.02} & \textbf{43999} & 0.06 & 42620.30 & \textbf{43999} & 3.13 & 125 & 44418 & 0 & 42477 & 0 \\ 
  R-dwt--503.mtx & 503 & 2762 & 258088.51 & 316834 & 18.54 & \textbf{270426.00} & \textbf{316403} & 14.53 & \textit{TL} & 317347 & 0 & 250309 & 1 \\ 
  S-sherman4.mtx & 546 & 1341 & \textbf{162642.47} & 169284 & 3.92 & 162372.00 & \textbf{168914} & 3.87 & 66 & 171364 & 0 & 162372 & 0 \\ 
  T-dwt--592.mtx & 592 & 2256 & 295127 & 342013 & 13.71 & \textbf{309821.00} & \textbf{341088} & 9.17 & \textit{TL} & 342583 & 0 & 295133 & 1 \\ 
  U-662-bus.mtx & 662 & 906 & \textbf{93056.25} & 95509 & 2.57 & 91209.90 & \textbf{95173} & 4.16 & 347 & 96009 & 0 & 91005 & 0 \\ 
  V-nos6.mtx & 675 & 1290 & 208656 & 211923 & 1.54 & \textbf{208658.00} & \textbf{211908} & 1.53 & 99 & 214026 & 0 & 208658 & 0 \\ 
  W-685-bus.mtx & 685 & 1282 & \textbf{150721.43} & 162327 & 7.15 & 150436.00 & \textbf{161821} & 7.04 & 569 & 162327 & 0 & 148193 & 0 \\ 
  X-can--715.mtx & 715 & 2975 & 409861.59 & 465576 & 11.97 & \textbf{426068.00} & \textbf{464250} & 8.22 & \textit{TL} & 465576 & 0 & 409014 & 1 \\ 
   \bottomrule
\end{tabular}
\endgroup
\end{table}

\begin{table}[h!tb]
\centering
\caption{Results for \blue{instances of the class} \RND-\lag \label{ta:rndlag}} 
\begingroup\scriptsize
\begin{tabular}{lll|rrr|rrrr|rr|rr}
  \toprule
   & & & \multicolumn{3}{c}{\FOI} & \multicolumn{4}{c}{\LAG} & \multicolumn{2}{c}{\HEUR} & \multicolumn{2}{c}{\DUAL}\\ name & $|V|$ & $|E|$ & $z_D$ & $z^*$ & $g [\%]$ & $z_D$ & $z^*$ & $g [\%]$ & $t [s]$ & $z^*$ & $t [s]$ & $z_D$ & $t [s]$ \\ \midrule
random500-1000-1 & 500 & 1000 & \textbf{87139.6} & 92236 & 5.53 & 84429.00 & \textbf{91746} & 7.98 & 47 & 92851 & 0 & 83903 & 0 \\ 
  random500-1000-2 & 500 & 1000 & \textbf{87546.29} & 91361 & 4.18 & 84553.00 & \textbf{91124} & 7.21 & 45 & 91607 & 0 & 84120 & 0 \\ 
  random500-1000-3 & 500 & 1000 & \textbf{86763.58} & 90970 & 4.62 & 83530.50 & \textbf{90636} & 7.84 & 161 & 91605 & 0 & 82662 & 0 \\ 
  random500-1000-4 & 500 & 1000 & \textbf{87855.94} & 91805 & 4.30 & 84837.00 & \textbf{91470} & 7.25 & 47 & 92578 & 0 & 84551 & 0 \\ 
  random500-1000-5 & 500 & 1000 & \textbf{85586.29} & 89547 & 4.42 & 82675.00 & \textbf{89305} & 7.42 & 159 & 90019 & 0 & 82139 & 0 \\ 
  random500-1500-1 & 500 & 1500 & \textbf{140414.03} & 151689 & 7.43 & 136347.00 & \textbf{150407} & 9.35 & 226 & 152341 & 0 & 134706 & 0 \\ 
  random500-1500-2 & 500 & 1500 & \textbf{139672.08} & 152517 & 8.42 & 136926.00 & \textbf{151704} & 9.74 & 237 & 152517 & 0 & 135266 & 0 \\ 
  random500-1500-3 & 500 & 1500 & \textbf{139741.86} & 149669 & 6.63 & 135483.00 & \textbf{149240} & 9.22 & 224 & 150462 & 0 & 133744 & 0 \\ 
  random500-1500-4 & 500 & 1500 & \textbf{140033.31} & 153100 & 8.53 & 136242.00 & \textbf{151839} & 10.27 & 235 & 153315 & 0 & 135021 & 0 \\ 
  random500-1500-5 & 500 & 1500 & \textbf{141017.18} & 154663 & 8.82 & 138273.00 & \textbf{154249} & 10.36 & 65 & 155512 & 0 & 137289 & 0 \\ 
  random500-2000-1 & 500 & 2000 & \textbf{189879.38} & 214580 & 11.51 & 189065.00 & \textbf{213581} & 11.48 & 320 & 214778 & 0 & 187004 & 1 \\ 
  random500-2000-2 & 500 & 2000 & \textbf{193594.01} & 221148 & 12.46 & 192783.00 & \textbf{219807} & 12.29 & 325 & 221148 & 0 & 190568 & 1 \\ 
  random500-2000-3 & 500 & 2000 & \textbf{192637.36} & 214499 & 10.19 & 188478.00 & \textbf{212952} & 11.49 & 332 & 214499 & 0 & 186467 & 1 \\ 
  random500-2000-4 & 500 & 2000 & \textbf{196162.85} & 217207 & 9.69 & 192015.00 & \textbf{215966} & 11.09 & 301 & 217594 & 0 & 189697 & 1 \\ 
  random500-2000-5 & 500 & 2000 & \textbf{198630.17} & 220397 & 9.88 & 194450.00 & \textbf{219567} & 11.44 & 322 & 220397 & 0 & 192649 & 1 \\ 
   \bottomrule
\end{tabular}
\endgroup
\end{table}

\section{Conclusions \label{sec:con}}

In this work, we studied the recently introduced $S$-labeling problem, in which the nodes get labeled using labels from 1 to $|V|$ and for each edge the contribution to the objective function, called $S$-labeling number of the graph, is the minimum label of its end-nodes. The goal is to find a labeling $\phi^*$ with minimum value. We presented two Mixed-Integer Programming (MIP) formulations \blue{(denoted as \first\ and \second)} for the problem and developed branch-and-cut solution frameworks based on \blue{them}. These frameworks were enhanced with valid inequalities, starting and primal heuristics, and specialized branching rules. 
We showed that \blue{formulation \first\ is the projection of formulation \second. Moreover, we also showed that our MIP formulations have no integrality gap for paths, cycles and perfect $n$-ary trees. We proved this with the help of a (heuristic) algorithm to solve the dual of \second.} We gave, to the best of our knowledge, the first polynomial-time algorithm for the problem on $n$-ary trees as well as a closed formula for the $S$-labeling number. Finally, we also presented a Lagrangian heuristic and a constraint programming approach.

We assessed the efficiency of our proposed solution methods in a computational study. The study reveals, that for caterpillar graphs and bipartite graphs our formulation may also have no integrality gap, unfortunately, we were not able to prove any results on this. Moreover, our MIP-approaches are quite effective for solving the \SLP \ to optimality on general graphs with up to around 100 nodes within the timelimit of 600 seconds, and the proposed enhancements, especially the valid inequalities, are helpful. For larger graphs with up to 1000 nodes, the Lagrangian heuristic produces solutions with an optimality gap of about 5-15\% for most of the considered instances and the smaller-sized MIP formulation also provides good results (the size of the MIP formulations becomes burdensome for these larger graphs).
There are various avenues for further work: i) further enhancing the presented MIP-approaches by e.g., additional valid inequalities or other techniques; ii)
development of \blue{approaches to deal with large-scale instances. For the exact solution of large-scale instances, trying to find alternative (smaller-sized) MIP-approaches or developing a (combinatorial) branch-and-bound based on the dual heuristic or the Lagrangian relaxation could be interesting.
Aside from exact algorithms, the design of (meta-)heuristic approaches for large-scale instances could also be a worthwhile topic;} iii) further investigation, if there are additional graph classes, where the problem can be solved in polynomial-time, in particular, bipartite graphs could be an interesting class, as our presented MIP may have no integrality gap.

\section*{Acknowledgements}

The research was supported by the Austrian Research Fund (FWF): P 26755-N19 and P 31366-NBL.


\end{document}